\colorlet{cmarek}{Chartreuse3}
\colorlet{cmonika}{DarkOrchid1}
\DeclareMathOperator{\e}{\mathrm{e}}
\DeclareMathOperator{\dd}{\mathrm{d\!}}
\newcommand{\etal}{\textit{et al.}\xspace}
\renewcommand{\Re}{\textrm{Re}}
\renewcommand{\Im}{\textrm{Im}}
\newcommand{\N}{\varmathbb{N}}
\newcommand{\R}{\varmathbb{R}}
\newcommand{\C}{\varmathbb{C}}
\newcommand{\Cb}{\mathcal{C}}
\newcommand{\Kb}{\mathcal{K}}
\newcommand*{\kryt}{\textrm{cr}}
\newtheorem{thm}{Theorem}[section]
\newtheorem{prop}[thm]{Proposition}
\newtheorem{rem}[thm]{Remark}
\newenvironment{proof}[1][]{\par\noindent\textbf{Proof #1: }}{\hfill\rule{1.3ex}{1.3ex}\par}
\numberwithin{equation}{section}
\begin{document}
\pagestyle{myheadings}
\markright{Bodnar, M. \& Piotrowska,~M.J.: Family of angiogenesis models with distributed delay}

\noindent\begin{tabular}{|p{\textwidth}}
	\Large\bf Analysis of the family of angiogenesis models with distributed time delays \\\vspace{0.01cm}
    \it Bodnar, M.$^{*,1}$ \& Piotrowska, M.J.$^{*,2}$\\\vspace{0.02cm}
\it\small $^*$Institute of Applied Mathematics and Mechanics,\\
\it\small University of Warsaw, Banacha 2, 02-097 Warsaw, Poland\\\vspace{0.01cm}
\small  $^1$\texttt{mbodnar@mimuw.edu.pl}, $^2$\texttt{monika@mimuw.edu.pl}\\
    \multicolumn{1}{|r}{\large\color{orange} Research Article} \\
	\\
	\hline
\end{tabular}
\thispagestyle{empty}

\tableofcontents
\noindent\begin{tabular}{p{\textwidth}}
	\\
	\hline
\end{tabular}

\begin{abstract}
In the presented paper a~family of angiogenesis models, that is a~generalisation of the Hahnfeldt \etal model is  proposed.
Considered family of models consists of two differential equations
with distributed time delays. 
The global existence and the uniqueness of the solutions are proved. Moreover, the stability 
of the unique positive steady state is examined in the case of Erlang 
and piecewise linear delay distributions.
Theorems guaranteeing the existence of  stability switches and 
occurrence of  Hopf bifurcations are proved. Theoretical results are illustrated by numerical analysis performed
for parameters estimated by Hahnfeldt \etal (\textit{Cancer Res.}, 1999). 
\end{abstract}

Keywords: 
delay differential equations, distributed delay, stability analysis, Hopf bifurcation, angiogenesis


\section{Introduction}

Angiogenesis is a~very complex process which accompanies us through our whole life starting from the development of the 
embryo and ending with wound healing in an advanced age  since it is a~process of blood vessels formation from pre-existing structures.  
Clearly,  it is often considered as a~vital process involved in organism growth and development. On the other hand,
it can also be a~pathological process since it may promote the growth of solid tumours. Indeed, in the first stage of 
solid tumour development cells create multicellular spheroid (MCS) --- a~small spherical aggregation. For a~fast growing tumour (comparing to the healthy tissue) tumour's cells located in the centre of MCS receive 
less and less nutrients (such as glucose and oxygen) since they are only supplied through the diffusion of that substances 
from the external vessels. Hence, when MSC reaches a~certain size (usually around 2--3 mm in diameter, 
\cite{folkman1971tumor,GIMBRONE1972}) two processes are observed: the saturation of growing cellular mass, 
and the formation of necrotic core in the centre of MCS. 
The tumour cells that are poorly nourished secrete number of angiogenic factors, e.g. FGF, VEGF, VEGFR, Ang1 and Ang2, 
which promote the proliferation and the differentiation of endothelial cells, smooth muscle cells, fibroblasts and also 
stabilise new created vessels initiating the process of angiogenesis. 
From that point of view, angiogenesis can be also considered as an essential step in the  tumours transition from less harm for hosts avascular forms to cancers that are able to metastasise and finally cause the lethal outcome of the disease. 

In~\cite{folkman1971tumor} Folkman showed that the growth of solid tumours strongly depends on the amount of blood vessels that are induced to grow by tumours. He summarised that, if the tumour could be stopped from  its own blood supply, it would wither and die, and nowadays  he is considered as a~father of an
anti-angiogenic therapy the approach that aims to prevent the tumour from developing its own blood supply system.

On the other hand, angiogenesis has also an essential role in the tumours treatment during chemotherapy  since 
anti-cancer drugs distributed with blood need to be delivered to the tumour and efficiently working vessels net 
allows the drugs to penetrate better the tumour structure. One also needs to take into account the fact that the vessel's 
net developed due to  angiogenesis initiated by tumour cells is not as efficient as the net in healthy tissue 
e.g. consists of loops. That causes large difficulties during the treatment of tumours because drugs transport is often 
ineffective.
Because of the reasons explained above, the process of angiogenesis is very important for the solid tumour 
growth and also for the anti-tumour treatment. Hence, a~
number of authors described that process using various mathematical models: macroscopic
\cite{Anderson1998,Aubert2011,bodnar13mbe_angio,Chaplain1990, Chaplain1993,donofrio2004,donofrio2006, onf2009, 
Hahnfeldt1999}, individual-based models \cite{Chaplain2012,Watson2012} or hybrid models  \cite{Chaplain2012a,McDougall2012}.
One of the most well recognised angiogenesis models was proposed by 
Hahnfeldt~\etal~\cite{Hahnfeldt1999} later 
on studied in~\cite{donofrio2004}. Among others, models with discrete delays describing the same angiogenesis 
process were recently considered in~\cite{bodnar13mbe_angio} and~\cite{mbuf07jbs}. Models investigating 
the tumour development and angiogenesis in the context of anti-angiogenic therapy, chemotherapy 
or radiotherapy were also considered 
in~\cite{mbuf09appl,Alberto2007prE,ergun03bmb,McDougall2002,Orme1997,angioMBE,Sachs2001}, in 
the context of optimal treatment schedules~\cite{ledz2007,ledz2008, ledz2009, 
swier2006,swier1,Swierniak2012} or 
in context of vessels maturation in~\cite{Arakelyan2005, arakelyan}.

Let consider the family of the angiogenesis models that consists of two differential equations 
in the following form
\begin{subequations}\label{model}
	\begin{align}
		\frac{\dd}{\dd t}p(t)&=rp(t)h\left(\frac{p(t-\tau_1)}{q(t-\tau_1)}\right),
					\label{model:1}\\
		\frac{\dd}{\dd t}q(t)&=q(t)\Bigg(b\left(\frac{p(t-\tau_2)}{q(t-\tau_2)}\right)^{\alpha} -a_Hp^{2/3}(t) 
					- \mu \Bigg) ,\label{model:2} 
	\end{align}
\end{subequations}
where  variables $p(t)$ and 
$q(t)$ represent the tumour volume at time $t$, and the 
maximal  tumour size for which tumour cells can be nourished by the present vasculature. Thus, 
the ratio $\frac{p}{q}$ is interpreted as the measure of vessels density.
The function $h$ reflects dependence of tumour growth rate on
the vascularisation. We assume $h$ is decreasing.
In the literature, in the context of  model~\eqref{model}, it is usually assumed that the 
tumour growth is governed by logistic, generalised logistic (see~\cite{donofrio2004, 
donofrio2006,onf2009}) or by  Gompertzian (see~\cite{Hahnfeldt1999,donofrio2004}) law. 
The parameter $r$ reflects the maximal tumour growth rate. The delays $\tau_1$ and $\tau_2$ present in the system  
represent time lags in the processes of the tumour growth and the vessels formation, respectively. 
Moreover, it is assumed that the population of the endothelial cells depends on both: the stimulation process 
initiated by poorly nourished tumour cells and the inhibiting factors secreted by tumour cells causing the lost of vessels. The parameter $\alpha$ reflects the strength of the dependence of the vessel dynamics on the 
ratio  $\tfrac{p}{q}$.  Parameters $a_H$ and $b$  are proportionality parameters of inhibition 
and stimulation of the angiogenesis process, respectively. 
We consider two processes, of which one appears on the surface of 
a~sphere and the other the inside the sphere, and due to the spherical symmetry assumption
we suppose that the ratio of the surface process depends proportionally to the tumour surface and 
it is  represented by the exponent~$2/3$.
Thus, the term $-a_Hq(t)p^{\frac{2}{3}}(t)$ appears 
in Eq.~\eqref{model:2}.
In model~\eqref{model} the spontaneous loss of functional vasculature is represented 
by the term $-\mu q(t)$. 
In~\cite{Hahnfeldt1999}, the parameter $\mu$ was estimated to be equal to zero. 
On the other hand, the term $-\mu q(t)$ for $\mu>0$ can be also interpreted as a~constant continuous anti-angiogenic 
treatment, hence it is also considered in the presented study. 
For detailed derivation of system~\eqref{model} without delays and with logarithmic function $h$ from 
a reaction-diffusion system see~\cite{Hahnfeldt1999}.

Model~\eqref{model} 
with  Gompertzian tumour growth, $\alpha=1$, and without time delays 
was firstly proposed by Hahnfeldt~\etal in \cite{Hahnfeldt1999}. 
The modification of the Hahnfeldt~\etal model was considered by various authors. d'Onofrio and 
Gandolfi~\cite{donofrio2004} proposed system of ordinary differential equations (ODEs) based on Hahnfeldt~\etal
idea but with $h$ being linear or logarithmic function, and with $\alpha=0$, that is when the dynamics of the second 
variable  of the model does not depend on the vascularisation of the tumour. Later in~\cite{mbuf07kkzmbm} 
Bodnar and Fory\'{s} introduced discrete time delays into the model with the Gompertzian tumour growth and 
the family of models for parameter $\alpha$ from the interval $[0,1]$ was 
considered. The analysis was later extended in~\cite{mbuf09appl}. In the same time, 
d'Onofrio and Gandolfi in~\cite{onf2009} studied the model for $\alpha=0$ with discrete time delays and 
different functions $h$. The analysis of the family of the models with discrete delays and the  Gompertzian tumour growth was extended 
in~\cite{Piotrowska2011,Forys2013} where the directions of the existing Hopf bifurcations were studied.

However, in all these papers only discrete time delays were considered. 
One of the reasons is the fact that mathematical analysis of such models is easier than the models with distributed delays.
Of course, in reality the duration of a~process is never constant and it usually fluctuates around some value. Thus, 
we believe that delay is somehow distributed around some average value. This is the main reason
why   in this paper we study the modification of the family of 
models~\eqref{model} in which the delays are distributed around theirs average values $\tau_1$ and $\tau_2$ 
instead of being concentrated in these points. On the other hand, we should also point out, 
that it might be difficult to estimate a~particular delay distribution form experimental data.

Up to our knowledge such modification of the family of models has not 
been considered yet. We are also not aware of any result considering linear systems similar to the linearised 
system of the model with distributed delays considered in this paper. 
In fact known analytical results regarding the stability of trivial steady state for  linear equations with 
distributed delays are rather limited, see e.g. \cite{Kolmanovskii1992} and references therein. Most results concern a
single equation (see~\cite{Bernard2001,Campbell2009mmnp,Huang2012dcds}, and references therein). In some papers a~second 
order equation or a~system of equations with distributed delays are considered (see \cite{Kiss2010dcds} for the second 
order linear equation, and~\cite{Faria2008jde} for the Lotka-Volterra system). However, in these two cases the
time delays are  only finite and the delayed terms do not appear on the diagonal of the stability matrix, and hence 
these results cannot be applied directly to the system
considered in this paper.  The single infinite distributed time delay is considered in~\cite{alber2010} for the 
model of immune system--tumour interactions, however in that paper only an exponential distribution is considered and the 
linear chain trick that reduces the system with infinite delay to a~larger system of ODEs is used.

A~part of our results is based on application of the Mikhailov Criterion, which generalised version is formulated and 
proved in the Appendix. This generalisation plays an essential role in studying local stability of  the steady state
with certain distributed delays. 

The paper is organised as follows: in Section~\ref{sec:model} considered family of angiogenesis models with distributed 
delays is proposed, and a~proper phase space and initial conditions are defined.
In Section~\ref{sec:matprop} the mathematical analysis including the global existence and 
uniqueness of solutions, the stability of the existing steady state of considered family of models for different types of 
considered delay distributions is presented. In that section we also discuss possibilities of stability changes 
of the positive steady state. Next, our stability results are illustrated and extended by 
numerical simulations. In Section~\ref{sec:dis} we discuss and summary our results. Finally,  for completeness 
in~\ref{appen} we formulate generalized Mikhailov Criterion used in the paper and we prove it. 

\section{Family of angiogenesis models with distributed delays}\label{sec:model}

The  effect of the tumour stimulation of the vessels growth as well as the positive influence of the 
vessel network on the tumour dynamics is neither instantaneous nor delayed by
some constant value. Hence, to reflect reality better, instead of constant delays considered earlier 
in~\cite{donofrio2004}, \cite{Piotrowska2011} or~\cite{Forys2013} we consider the delays that are 
distributed around some mean value and their distributions are  given by the general probability densities $f_i$.
We study the following system
\begin{subequations}\label{modeldis}
\begin{align}
	\frac{\dd}{\dd t}p(t)&=rp(t)h\left(\int^{\infty}_0f_1(\tau)\frac{p(t-\tau)}{q(t-\tau)}\dd\tau\right),
								\label{modeldis:1}\\
	\frac{\dd}{\dd t}q(t)&=q(t)\Bigg(b\int^{\infty}_{0}
				f_2(\tau)\left(\frac{p(t-\tau)}{q(t-\tau)}\right)^{\alpha}\dd\tau -a_Hp^{2/3}(t) - \mu \Bigg) 
						,\label{modeldis:2} 
\end{align}
\end{subequations}
where  $f_i(s):[0,\infty)\to \R_{\ge}$ are delay distributions with the 
following properties: 
\begin{enumerate}[\bf (H1)]
	\item $\displaystyle\int^{\infty}_{0}f_i(s)\dd s=1$, $i=1,2$ and 
	\item $\displaystyle 0\le \int^{\infty}_{0}sf_i(s)\dd s<\infty$, $i=1,2$.
\end{enumerate} 
Moreover, we assume that the function $h$ has the following properties:
\begin{enumerate}[\bf (P1)]
	\item $h:(0,\infty) \to \R$ is a~continuously differentiable decreasing function;
	\item $h(1) = 0$;
	\item $h'(1) = -1$.
\end{enumerate} 
Note, that properties \textbf{(P2)} and \textbf{(P3)} do not make our studies less general as a~proper rescaling 
and a~suitable choice of parameter $r$ always allows us to arrive to the case $h(1) = 0$ and $h'(1)=-1$. 
Note also, that an arbitrary probability distribution defined on $[0,+\infty)$ with a~finite expectation fulfils assumptions \textbf{(H1)} and \textbf{(H2)}. 

To close model~\eqref{modeldis} we need to define initial conditions. We consider a~
continuous initial function $\phi:(-\infty,0]\rightarrow\R^2$. For infinite delays we need to regulate the 
behaviour of this function as $t$ tends to $-\infty$. To this end, we introduce a~suitable space. 
Let us denote as $\Cb= \mathbf{C}((-\infty,0],\R^2)$ the space of continuous functions defined on the interval 
$(-\infty,0]$ with values in $\R^2$. For an arbitrary chosen non-decreasing continuous function 
$\eta:(-\infty,0]\to\R^+$ such that $\displaystyle\lim_{\theta\to-\infty}\eta(\theta)=0$,  we define the Banach space  
\[
	\Kb_{\eta}  = \left\{ \varphi \in \Cb : \lim_{\theta\to-\infty} \varphi(\theta)\eta(\theta) = 0\quad 
		\text{ and } \quad \sup_{\theta \in (-\infty,0]} |\varphi(\theta)\eta(\theta)|<\infty\right\},
\]
with a~norm 
\[
	\|\varphi\|_{\eta} = \sup_{\theta \in (-\infty,0]} |\varphi(\theta)\eta(\theta)|,
	\quad \text{ for any } \quad 
	\varphi \in \Kb_{\eta}.
\]
We need initial functions $\phi$ to be in $\Kb_{\eta}$ for some arbitrary non-decreasing continuous 
function $\eta$ that tends to 0 for arguments tending to $-\infty$ (see~\cite{Hino1991}).  
If the delay is finite, that is if supports of $f_1$ and $f_2$ 
are compact, we suppose initial functions $\phi\in \mathbf{C}([-\tau_{\max{}},0],\R^2)$, where the interval
$[-\tau_{\max{}},0]$ contains supports of $f_i$, $i=1,2$. However, this is equivalent to consideration of 
the space $\Kb_{\eta}$ with an arbitrary $\eta$ being equal to $1$ on $[-\tau_{\max{}},0]$ and decreasing to $0$ 
in~$-\infty$. 
If the support of the probability density is unbounded and the initial function $\phi$ is also unbounded, 
than we need to choose an appropriate function $\eta$ (which in turn implies a~choice of the 
phase space $\Kb$). The function $\eta$ must be chosen to control the behaviour of the initial function
in $-\infty$.
However, due to biological interpretation of the model parameter, it is reasonable to restrict our analysis to
the globally bounded initial functions. Thus, we can chose  an arbitrary
positive continuous non-decreasing  function $\eta$ such that $\eta(\theta)\rightarrow 0$ for 
$\theta\rightarrow-\infty$ for example $\eta=e^{\theta}$.  Nevertheless, 
because such assumption would not make theorems' formulation simpler, we decided to 
present and prove theorems for arbitrary initial functions and not to assume a~particular form
of the function $\eta$.

\section{Mathematical analysis of family of angiogenesis models with distributed delays}\label{sec:matprop}

Due to 
properties \textbf{(P2)} and \textbf{(H1)} the steady states of 
system~\eqref{modeldis} are the same as for Hahnfeldt~\etal and d'Onofrio-Gandolfi models without delays or with 
discrete delays. Thus, system~\eqref{modeldis} has a~unique positive steady state 
$(p_e,q_e)$, where $p_e=q_e=\left(\frac{b-\mu}{a_H}\right)^{\frac{3}{2}}$ (compare e.g.~\cite{Piotrowska2011}) if
and only if  $b>\mu$.
Hence, in the rest of the paper we assume that $b>\mu$ holds. 

Note, that solution to the equations defined by~\eqref{modeldis} can be written in the exponential form. Thus, 
using the same argument as in~\cite{onf2009} or in~\cite{Piotrowska2011}  (for the discrete delay cases), 
we deduce that $\R_+^2$ is the invariant set for system~\eqref{modeldis}.

Note, that in system~\eqref{modeldis} terms with delay are of the form $p/q$. This, together with 
the invariance of $\R_+^2$ suggests the following change of variables
\[
x=\ln\left(\frac{p}{p_e}\right), \quad\quad y=\ln\left(\frac{pq_e}{qp_e}\right)
\]
giving
\begin{equation}\label{model_resc_suma}
	\begin{aligned}
		\frac{\dd}{\dd t}x(t)&=rh\left( \int^{\infty}_{0} f_1(\tau)\e^{y(t-\tau)}\dd\tau\right),
								\\
		\frac{\dd}{\dd t}y(t)&=rh\left( \int^{\infty}_{0} f_1(\tau)\e^{y(t-\tau)}\dd\tau\right) 
						-b\int^{\infty}_{0}f_2(\tau)\e^{\alpha y(t-\tau)}\dd\tau+(b-\mu)
						\e^{\frac{2}{3}x(t)}+\mu.
	\end{aligned}
\end{equation}
Newly introduced variable $y$ allows us to consider the system where only one variable ($y$) has delayed 
argument, whereas in system~\eqref{modeldis} both variables have delayed arguments.
 Clearly, the steady state for re-scaled 
system~\eqref{model_resc_suma} is $(x_e,y_e)=(0,0)$.

It should be mentioned here that the scaling procedure transforms  $\R_+^2$ into the whole $\R^2$, so 
space $\Kb_{\eta}$ is the appropriate phase space for the model~\eqref{model_resc_suma}.

\subsection{Existence and uniqueness}
\begin{prop}\label{thm:ex}
	Let $\eta:(-\infty,0]\to\R^+$ be a~continuous, non-decreasing function such that
	$\displaystyle\lim_{\theta\to-\infty}\eta(\theta)=0$, let functions $f_i$ fulfil~\textbf{\upshape 
	(H1)}--\textbf{\upshape (H2)}, and let function $h$ fulfils 
	\textbf{\upshape  (P1)}--\textbf{\upshape  (P3)}. For an arbitrary initial function
	$\phi=(\phi_1,\phi_2)\in\Kb_{\eta}$  there exists $t_{\phi}>0$ such that system~\eqref{model_resc_suma}
	with initial condition $x(t)=\phi_1(t)$, $y(t)=\phi_2(t)$ for $t\in (-\infty,0]$, has a~unique solution in 
	$\Kb_{\eta}$	defined on $t\in[0, t_{\phi})$.
\end{prop}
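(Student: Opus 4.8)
The plan is to cast the problem as a fixed-point problem for an abstract integral equation and apply the Banach contraction principle on a suitable closed subset of $\Kb_\eta$. First I would integrate system~\eqref{model_resc_suma} from $0$ to $t$ to obtain an equivalent fixed-point formulation: a pair $(x,y)$ is a solution on $[0,t_\phi)$ iff $(x(t),y(t)) = (\phi_1(0),\phi_2(0)) + \int_0^t F(x_s,y_s)\,\dd s$ for $t\ge 0$ and $(x,y)=(\phi_1,\phi_2)$ on $(-\infty,0]$, where $F$ denotes the right-hand side of~\eqref{model_resc_suma} regarded as a functional of the history segment. The key point is that $F$ depends on the history only through the two bounded continuous functionals $\tau\mapsto\int_0^\infty f_1(\tau)\e^{y(t-\tau)}\,\dd\tau$ and $\int_0^\infty f_2(\tau)\e^{\alpha y(t-\tau)}\,\dd\tau$, plus the pointwise term $\e^{\frac23 x(t)}$; by \textbf{(H1)} these integrals are well defined (convex combinations of values of $\e^{y(\cdot)}$, hence finite for any continuous history that is locally bounded), and by \textbf{(P1)} the map $h$ is $C^1$, hence locally Lipschitz.

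Next I would fix $\phi\in\Kb_\eta$, choose $R>0$ and $t_\phi>0$ small, and work in the metric space
\[
	X_{t_\phi} = \Bigl\{ (x,y)\in\mathbf{C}\bigl((-\infty,t_\phi],\R^2\bigr) : (x,y)|_{(-\infty,0]}=\phi,\
		\sup_{s\in[0,t_\phi]} |(x(s),y(s))-(\phi_1(0),\phi_2(0))| \le R \Bigr\},
\]
equipped with the sup norm on $[0,t_\phi]$ (the history part is fixed, so this is a complete metric space). On $X_{t_\phi}$ the variables $x,y$ stay in a fixed compact interval, so $\e^{y}$, $\e^{\alpha y}$, $\e^{\frac23 x}$ and the argument of $h$ all lie in a fixed compact set on which $h$ and the exponentials are Lipschitz with some constant $L$. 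A direct estimate then shows that the integral operator maps $X_{t_\phi}$ into itself provided $t_\phi\,\|F\|_\infty \le R$, and that it is a contraction with constant $L\,t_\phi<1$ once $t_\phi$ is chosen small enough. Banach's theorem yields a unique fixed point, i.e. a unique local solution; that the solution lies in $\Kb_\eta$ is immediate since on $[0,t_\phi]$ it is continuous and bounded while on $(-\infty,0]$ it coincides with $\phi\in\Kb_\eta$, and $\eta$ is bounded on $[0,t_\phi]$.

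The one genuinely delicate point — and the main obstacle — is the infinite delay: I must check that the functionals $y_s\mapsto\int_0^\infty f_i(\tau)\e^{(\cdot)y(s-\tau)}\,\dd\tau$ are continuous as maps from $\Kb_\eta$ to $\R$ and that, along a sequence in $X_{t_\phi}$, these integrals converge; here the role of the weight $\eta$ and of the fading-memory structure of $\Kb_\eta$ (cf.~\cite{Hino1991}) is essential, because $\e^{y(s-\tau)}$ need not be integrable against $f_i$ a priori for an arbitrary unbounded history — one uses that $|y(\theta)|\le\|y\|_\eta/\eta(\theta)$ grows at most like $1/\eta$, and that $f_i$ has finite mass and finite first moment by \textbf{(H1)}--\textbf{(H2)}, together with dominated convergence, to control the tail $\int_M^\infty$. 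Once this continuity is established uniformly on $X_{t_\phi}$, the contraction argument goes through verbatim and gives the stated $t_\phi>0$ and uniqueness in $\Kb_\eta$.
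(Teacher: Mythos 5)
Your overall strategy is sound but genuinely different from the paper's. The paper does not run a contraction argument at all: it merely verifies that the right-hand side of~\eqref{model_resc_suma} is locally Lipschitz on bounded sets of $\Kb_\eta$ (because $h$ is $C^1$ by \textbf{(P1)}, hence locally Lipschitz, and the exponentials are smooth) and then invokes the general local existence--uniqueness theorem for functional differential equations with infinite delay in a fading-memory phase space, namely Theorem~1.2 of Chapter~2 in~\cite{Hino1991}. What you propose is essentially to re-prove that abstract theorem in this concrete setting by Picard iteration on $X_{t_\phi}$; that is legitimate and more self-contained, at the cost of redoing work the cited reference already packages (in particular the verification that the fixed point, extended by $\phi$ on $(-\infty,0]$, has segments $(x_t,y_t)$ lying in $\Kb_\eta$, which you dispatch quickly but which is indeed routine here since $\eta$ is bounded on compact sets).

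The one step that does not work as written is precisely the point you flag as delicate. You claim the tail of $\int_0^\infty f_i(\tau)\e^{y(t-\tau)}\dd\tau$ is controlled by combining $|y(\theta)|\le \|y\|_\eta/\eta(\theta)$ with the finite mass and finite first moment of $f_i$ and dominated convergence. This is not a proof: since $\eta(\theta)\to 0$ as $\theta\to-\infty$, that bound only yields $\e^{y(\theta)}\le \e^{\|y\|_\eta/\eta(\theta)}$, which can grow arbitrarily fast (take $\eta(\theta)=\e^{\theta}$; the bound is then doubly exponential in $|\theta|$), while \textbf{(H1)}--\textbf{(H2)} impose no decay on $f_i$ beyond a finite first moment. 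The integral can therefore genuinely diverge for some $\phi\in\Kb_\eta$ when the support of $f_i$ is unbounded, so the functional is simply not defined on all of $\Kb_\eta$ without a compatibility condition between $\eta$ and $f_i$ (or a restriction to bounded histories). To be fair, the paper's own proof shares this lacuna --- it silently assumes the right-hand side is well defined on $\Kb_\eta$, and Section~2 only addresses the issue informally by saying that $\eta$ ``must be chosen to control the behaviour of the initial function in $-\infty$'' and that biologically one restricts to bounded initial data. If you keep your argument, state such a hypothesis explicitly; note also that once well-definedness is granted, the contraction estimate is easier than you suggest, because all competitors in $X_{t_\phi}$ coincide with $\phi$ on $(-\infty,0]$, so for $t\le t_\phi$ the portion of the integral over $\tau\ge t$ is a fixed quantity and only the integral over the compact range $\tau\in[0,t]$, where everything lives in a fixed compact set, actually varies.
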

\begin{proof}
The right-hand side of system~\eqref{model_resc_suma} fulfils a~local Lipschitz condition. 
In fact, the assumption \textbf{(P1)} implies that the derivative of $h$ is bounded on an arbitrarily chosen compact set of $(0,+\infty)$. Thus, 
the function $h$ is locally Lipchitz continuous function as well as all functions on the right-hand side of~\eqref{model_resc_suma}. 
This implies that the Lipschitz condition is fulfilled for any bounded set in $\Kb_\eta$.
Hence, 
there exits a~unique solution to system~\eqref{model_resc_suma} defined on $t\in[0, t_{\phi})$ 
(see~\cite[Chapter 2, Theorem~1.2]{Hino1991}).
\end{proof}

\begin{thm}[global existence]
If assumptions of Proposition~\ref{thm:ex} are fulfilled, the probability densities $f_i$ and an initial function
	are globally bounded, then solutions to~\eqref{model_resc_suma} are 
	defined for all $t\ge 0$. 
\end{thm}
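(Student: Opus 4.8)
The plan is to show that each component of the solution $(x,y)$ to system~\eqref{model_resc_suma} stays bounded on any finite time interval $[0,T)$ on which the solution exists; by the standard continuation argument (a maximal solution that fails to exist beyond a finite $t_\phi$ must blow up in $\Kb_\eta$-norm), this forces $t_\phi = \infty$. Since the initial function $\phi$ and the densities $f_i$ are globally bounded, the only thing that could go wrong is a finite-time blow-up of $x$ or $y$ generated by the right-hand side itself, so the whole argument reduces to a priori estimates. I would carry these out in the order $x$ first, then the delayed integrals, then $y$.

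First I would bound $x$. Because $h$ is decreasing with $h(1)=0$, and the argument $\int_0^\infty f_1(\tau)\e^{y(t-\tau)}\dd\tau$ is always positive, we have $rh(\cdot)$ bounded \emph{above} by $rh(\varepsilon)$ only once we know $y$ is bounded below — so instead I would use monotonicity the other way: $\frac{\dd}{\dd t}x(t) = rh(\text{positive})$, and since $h$ maps $(0,\infty)$ into $\R$ with no a priori sign, the cleanest route is to first observe that $y$ cannot go to $+\infty$ in finite time. For this, look at the $y$-equation: the term $-b\int_0^\infty f_2(\tau)\e^{\alpha y(t-\tau)}\dd\tau$ is nonpositive, the term $rh(\int_0^\infty f_1(\tau)\e^{y(t-\tau)}\dd\tau)$ is at most $rh(0^+)$ which may be $+\infty$ — here one uses that $h$ is defined and finite on all of $(0,\infty)$ and the integral argument is bounded below on $[0,T)$ as long as $y$ is bounded below on $[-\tau_{\max},T)$, which holds by induction on successive delay-length intervals using the boundedness of $\phi$. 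The genuinely positive destabilising term is $(b-\mu)\e^{\frac 23 x(t)}+\mu$. So I would first get an upper bound on $x$: from $\dot x = rh(\int f_1 \e^{y(t-\tau)}\dd\tau)$ and $h$ bounded above by $rh(m)$ where $m>0$ is a lower bound for $\e^{y}$ on the relevant window (again obtained step-by-step over intervals of length $\tau_{\max}$, or over $[0,\infty)$ directly if $y$ is bounded below globally), conclude $x(t) \le x(0) + rh(m) t$, a linear bound, hence $\e^{\frac 23 x(t)}$ grows at most exponentially.

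Then, feeding this into the $y$-equation, $\dot y(t) \le C_1 + C_2 \e^{\frac 23 x(t)} \le C_1 + C_3 \e^{c t}$, which gives an explicit (doubly exponential but finite) upper bound for $y$ on $[0,T)$; combined with a lower bound for $y$ (from $\dot y \ge rh(M) - b\e^{\alpha \cdot (\text{upper bound of }y)} + \mu$, using that the bad term is the \emph{negative} exponential of an already-bounded quantity), we get $y$ bounded on $[0,T)$. Going back once more, a two-sided bound on $y$ gives a two-sided bound on $\dot x$, hence a two-sided linear bound on $x$. All four one-sided estimates are finite on every finite $[0,T)$, so $\|(x_t,y_t)\|_\eta$ stays finite, contradicting blow-up at a finite $t_\phi$. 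The main obstacle is the bookkeeping of the mutual dependence: the bound on $x$ needs a lower bound on $y$, and the bound on $y$ needs an upper bound on $x$ — so I would set this up as a bootstrap over successive intervals $[0,\tau_{\max}], [\tau_{\max}, 2\tau_{\max}], \dots$ (in the finite-delay case) or, in the infinite-delay case, first establish a global lower bound for $y$ directly from the structure of the $y$-equation (the only term that can drive $y$ down is $-b\e^{\alpha y}$, which is self-limiting) before running the upper estimates, so that no circularity remains.
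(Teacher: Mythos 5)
Your overall strategy (a priori two-sided bounds on $x$ and $y$ on finite intervals, combined with the continuation alternative in $\Kb_\eta$) is the same as the paper's, and you correctly identify both the crux (the argument of $h$ must be kept away from $0$, since $h(0^+)$ may be $+\infty$) and the circularity (upper bound on $y$ needs a lower bound on the delayed integral, lower bound on $y$ needs an upper bound on it). But the device you propose to break the circularity does not work as stated. Stepping over intervals of length $\tau_{\max}$ does not decouple the delayed integral from the unknown part of the solution: for $t$ near $\tau_{\max}$ the initial-data contribution $\int_t^{\tau_{\max}} f_1(\tau)\e^{\phi_2(t-\tau)}\dd\tau$ degenerates to $0$, so you get no uniform positive lower bound for $\int_0^\infty f_1(\tau)\e^{y(t-\tau)}\dd\tau$ on the whole first interval from $\phi$ alone. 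The paper's fix is to choose the step $\delta$ so that $\int_\delta^\infty f_i(\tau)\dd\tau>0$; then for $t\in[0,\delta]$ the tail $\int_\delta^\infty f_1(\tau)\e^{y(t-\tau)}\dd\tau$ involves only the (globally bounded) initial function and is bounded below by a positive constant, while the head $\int_0^\delta$ is simply nonnegative. This yields the upper bounds on $x$ and $y$ first, and only then the upper bound on the integrals and hence the lower bounds; the same $\delta$-step works verbatim for unbounded supports, so no separate infinite-delay argument is needed.

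Your fallback for the infinite-delay case is moreover incorrect: it is not true that ``the only term that can drive $y$ down is $-b\e^{\alpha y}$, which is self-limiting.'' First, that term is $-b\int_0^\infty f_2(\tau)\e^{\alpha y(t-\tau)}\dd\tau$, a \emph{delayed} quantity, so its size is governed by past values of $y$ and it is not self-limiting in the ODE sense. Second, and more importantly, the term $r\,h\bigl(\int_0^\infty f_1(\tau)\e^{y(t-\tau)}\dd\tau\bigr)$ is unbounded below: for the paper's standing example $h(\theta)=-\ln\theta$ it tends to $-\infty$ as the integral grows, so it too can drive $y$ down arbitrarily fast until $y$ has been bounded \emph{above} on the relevant past window. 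Hence a ``direct global lower bound for $y$ from the structure of the equation'' is not available, and without it your bootstrap does not close. The remaining estimates in your sketch (linear growth of $x$, exponential bound on $\e^{2x/3}$, two-sided bounds feeding back into $\dot x$) are fine once the $\delta$-step is in place.
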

\begin{proof}
	The right-hand side of~\eqref{model_resc_suma} can be written in a~functional form as 
	\[
		\frac{\dd }{\dd t} x(t) = G_1(x_t,y_t)\,, \quad \frac{\dd }{\dd t} y(t) = G_2(x_t,y_t)\,,
	\]
	with
	\[
		\begin{split}
			G_1(\phi_x,\phi_y)&=r h\left( \int^{\infty}_{0} f_1(\tau)\e^{\phi_y(-\tau)}\dd\tau\right),
									\\
			G_2(\phi_x,\phi_y)&=r h\left( \int^{\infty}_{0} f_1(\tau)\e^{\phi_y(-\tau)}\dd\tau\right) 
							-b\int^{\infty}_{0}f_2(\tau)\e^{\alpha \phi_y(-\tau)}\dd\tau+(b-\mu)
							\e^{\frac{2}{3}\phi_x(0)}+\mu,
		\end{split}
	\]
	for $(\phi_x,\phi_y)\in \Cb$.
	Note that due to \textbf{(P1)} we have $|G_1(\phi_x,\phi_y)| \le r h(\exp(-||\phi_y||))$ and similar inequality 
	holds for $|G_2(\phi_x,\phi_y)|$. This implies, that 
	the function $(G_1,G_2)$ maps bounded sets of $\Cb$ into bounded sets of $\R^2$ (so 
	their closures are compact). Thus, if the solution to~\eqref{model_resc_suma} cannot be 
	prolonged beyond the interval $[0,T)$, for some finite $T$, then $\displaystyle\lim_{t\to T} 
	\bigl(\| x_t\|_{\eta} + \|y_t\|_{\eta}\bigr) = +\infty$  (see~\cite[Chapter~2, Theorem~2.7]{Hino1991}). In the following, we show that  $x$ and $y$ are 
	bounded on $[0,T)$ hence, the solution exists for all $t \ge 0$.
	
	Let $\delta>0$ be an arbitrary number such that $\int_{\delta}^{\infty} f_i(\tau)\dd\tau>0$
	for $i=1,2$. Using the step method we choose the time step equal to $\delta$ and we show that the solution 
	to~\eqref{model_resc_suma} can be prolonged on the interval $[0,\delta]$ and hence on the 
	interval  $[n\delta,(n+1)\delta]$, for any $n\in\N$. Let~letters $C_i$ denote constants 
	that will be chosen later in a~suitable way.

	First, we provide an upper bound of the solution. Due to boundedness of $y(t)$ for $t\le 0$, we write
	\[
		\int^{\infty}_{0} f_1(\tau)\e^{y(t-\tau)}\dd\tau = \int^{\delta}_{0} f_1(\tau)\e^{y(t-\tau)}\dd\tau +
		\int^{\infty}_{\delta} f_1(\tau)\e^{y(t-\tau)}\dd\tau \ge \int^{\infty}_{\delta} f_1(\tau)\e^{y(t-\tau)}\dd\tau
		\ge C_1,
	\]
	for all $t\in [0,\delta]$. This estimation leads to a~conclusion
	\[
		\frac{\dd }{\dd t} x(t) \le r\, h\bigl(C_1\bigr) \; \Longrightarrow \; x(t) \le C_2 \quad \text{ for } 
			t\in [0,\delta].  
	\]
	From the second equation of~\eqref{model_resc_suma}, arguing in a~similar way,  we have 
	\[
		\frac{\dd }{\dd t} y(t) \le  r\,h\bigl(C_1\bigr) +(b-\mu)\e^{C_3} +\mu  \; \Longrightarrow \; y(t) \le C_4,
			\quad \text{ for } t\in [0,\delta].  
	\]
	Now, we proceed with a~lower bound of the solutions. 
	Splitting integral on the interval $(0,+\infty)$ into two integrals and using the fact that 
	$y(t)$ is bounded we obtain 
	\[
		\begin{split}
			\int_0^{\infty}f_1(\tau) \e^{y(t-\tau)}\dd\tau &=
				\int_0^{\delta}f_1(\tau) \e^{y(t-\tau)}\dd\tau  + \int_{\delta}^{\infty} f_1(\tau)\e^{y(t-\tau)}\dd\tau
				\le \int_0^{\delta}f_1(\tau) \e^{y_M}\dd\tau  + C_5\\
				& \le \int_0^{\infty}f_1(\tau) \e^{y_M}\dd\tau  + C_5 \le \e^{y_M}+C_5 = C_6\,,
		\end{split}
	\]
	where $y_M$ is an upper bound of $y(t)$ on the interval  $[-\tau,\delta-\tau]$. This estimation together with 
	a similar argument applied to the second integral in the second equation of~\eqref{model_resc_suma} yields 
	\[
		\int_0^{\infty}f_2(\tau) \e^{\alpha y(t-\tau)}\dd\tau \le C_7\,.
	\]
	Therefore, from the second equation of~\eqref{model_resc_suma} we have 
	\[
		\frac{\dd }{\dd t} y(t) \ge r h\bigl(C_6\bigr) - b\, C_7\; \Longrightarrow \;
				y(t) \ge C_8, \; \text{ for } \; t\in [0,\delta]. 
	\]
	The boundedness of $y$ together with the form of first equation of~\eqref{model_resc_suma} implies boundedness 
	of $x(t)$ on $[0,\delta]$.
	Now, knowing that $x(t)$ and $y(t)$ are bounded on the interval $(-\infty,\delta]$, an analogous 
	reasoning allows us to deduce the boundedness of the solution on the interval $(-\infty,2\delta]$.
	 Hence, the mathematical induction yields the boundedness of the solutions 
	to~\eqref{model_resc_suma} 
	on each compact interval included in $[0,+\infty)$, which completes the proof.
\end{proof}

\subsection{Stability and Hopf bifurcations}
In this section we study the local stability of the steady state (0,0) using standard linearisation 
technique.
Linearisation of system~\eqref{model_resc_suma} around the steady state $(0,0)$ has the following form
\begin{equation}\label{sys:lin}
	\begin{aligned}
		\frac{\dd}{\dd t} x(t) &= r h'(1) \int_0^{\infty} f_1(\tau) y(t-\tau)\dd \tau \, ,\\
		\frac{\dd}{\dd t} y(t) &= \frac{2}{3}(b-\mu)x(t) + \int_0^{\infty}\Bigl(r h'(1) f_1(\tau)
					-\alpha b  f_2(\tau)\Bigl) y(t-\tau)\dd \tau \, ,
	\end{aligned}
\end{equation}
and, due to the equality $h'(1)=-1$ (see Property \textbf{(P3)}), the corresponding characteristic function is given by
\[
 W(\lambda) = \det \begin{bmatrix}
			\displaystyle\lambda &\displaystyle r \int_0^{\infty} f_1(\tau) \e^{-\lambda\tau}\dd \tau \\
			\displaystyle-\frac{2}{3}(b-\mu) &
				\displaystyle\lambda+ \int_0^{\infty}\Bigl(rf_1(\tau)+\alpha b  f_2(\tau)\Bigl) \e^{-\lambda\tau}\dd \tau
	\end{bmatrix}.
\]
Thus,
\[
	W(\lambda) = \lambda^2 +\lambda \int_0^{\infty}\Bigl(rf_1(\tau)+\alpha b  f_2(\tau)\Bigl) \e^{-\lambda\tau}\dd \tau
			+ \frac{2r}{3}(b-\mu)\int_0^{\infty} f_1(\tau) \e^{-\lambda\tau}\dd \tau.
\]
In general, as the probability densities one considers the specific distributions which describe the experimental data or
 studied phenomena  in  the best way. In the present paper, we consider two particular types of the probability densities. One type is given by
\begin{equation}\label{eq:zabek}
		f_i(\tau) = \frac{1}{\varepsilon{^2}}\begin{cases}
					\varepsilon -\sigma+\tau, & \tau \in [\sigma-\varepsilon,\sigma), \\
					\varepsilon +\sigma -\tau, & \tau\in [\sigma,\sigma+\varepsilon], \quad\quad \textrm{for} \quad\sigma\geq\varepsilon, i=1,2, \\
					0 & \text{otherwise}.
				\end{cases}
\end{equation}
We call probability densities defined by~\eqref{eq:zabek} piecewise linear probability densities.
Note, that for $\varepsilon\to 0$ the functions $f_i$ converge to Dirac delta at the points 
$\sigma$ and therefore, system~\eqref{modeldis} becomes a~system with discrete delays
considered by~\cite{donofrio2004,mbuf09appl,Piotrowska2011,Forys2013}.
Since we assume that all considered probability densities are defined on interval $[0,\infty)$ condition 
$\sigma\ge\varepsilon$ must be fulfilled. Note, that for 
$\sigma\ge\varepsilon$ the average value of $f_i$ is equal to $\sigma$ and the standard deviation is equal to 
$\varepsilon/\sqrt{6}$. For $\sigma<\varepsilon$ one obtains so-called neutral equations, for details see~\cite{Hale93}.

In this paper, we also consider second type of probability densities so-called the Erlang probability densities separated or not from zero 
by~$\sigma\geq0$, i.e. we study system~\eqref{modeldis} with functions given by
\begin{equation}\label{eq:Erlang}
	f_1(\tau) = g_{m_1}(\tau-\sigma)\,, \qquad
	f_2(\tau) = g_{m_2}(\tau-\sigma)\,,
\end{equation}
where $g_{m_i}(\tau)$, $i=1,2$,  are called non-shifted Erlang distributions and are defined by 
\begin{equation}\label{eq:defE}
g_{m_i}(s)=\frac{a}{(m_i-1)!}(as)^{m_i-1}\e^{-as},
\end{equation}
with $a>0$, $s\geq 0$. The mean value of the~non-shifted Erlang distribution $g_{m_i}$ is given 
by $\frac{m_i}{a}$, while the variance is equal to $\frac{m_i}{a^2}$. Hence, 
the average delay is equal to this mean and the deviation $\sqrt{\frac{m_i}{a^2}}$ measures the 
degree of concentration of the delays about the average delay. Clearly, the non-shifted Erlang 
distribution is a~special case of the Gamma distribution, where the shape parameter $m_i$ is an 
integer. It is also easy to see that the non-shifted Erlang distribution is the generalisation of 
the exponential distribution, which one gets taking $m_i=1$. On the other hand, for the non-shifted Erlang distributions when 
$m_i\rightarrow+\infty$ the probability densities $g_{m_i}$ converge to a~Dirac distributions and hence the 
system with discrete delays~\eqref{model} is recovered as a~limit of Erlang distributions for 
system~\eqref{modeldis}. For the shifted Erlang distribution the mean value is 
$\sigma+\frac{m}{a}$, while variances stay the same as for the non-shifted case. 

\subsubsection{Erlang probability densities}

The Erlang probability densities separated from zero by $\sigma$  read
\[
	f_i(\tau) = \frac{a^{m_i}(\tau-\sigma)^{m_i-1}}{(m_i-1)!}\e^{-a(\tau-\sigma)},\;\; \text{ for } \; \tau\ge \sigma
\]
and $0$ otherwise, for $i=1,2$. Note, that in this paper we consider only the case $a>0$. Then
for probability densities given by~\eqref{eq:Erlang}--\eqref{eq:defE} we have
\[
	\int_0^{\infty} f_i(\tau) \e^{-\lambda\tau}\dd \tau=  \frac{a^{m_i}}{(a+\lambda)^{m_i}} \e^{-\lambda\sigma}.
\]
Hence, the characteristic function has the following form
\begin{equation}\label{eq:erl:dow}
	W(\lambda) = \lambda^2 +\lambda\left(r\frac{a^{m_1}}{(a+\lambda)^{m_1}}\e^{-\lambda\sigma}+\alpha b\frac{a^{m_2}}{(a+\lambda)^{m_2}}\e^{-\lambda\sigma} 
\right)
+ \frac{2r}{3}(b-\mu) \frac{a^{m_1}}{(a+\lambda)^{m_1}}\e^{-\lambda\sigma}.
\end{equation}
Define
\begin{equation}\label{def:betagamag}
	\beta = r+\alpha b \quad \text{and} \quad
	\gamma = \frac{2 r\, (b-\mu)}{3}.
\end{equation} 

\begin{prop}\label{thm:non-shiftE}
Let $b>\mu$. The trivial steady state of system~\eqref{model_resc_suma} with the non-shifted Erlang probability density 
 given by~\eqref{eq:Erlang}--\eqref{eq:defE} ($\sigma=0$) is locally asymptotically stable if

\begin{enumerate}[\upshape (i)]
 	\item $a\beta>\gamma$ for $m_1=m_2=1$ ;
	\item $a >  \frac{1}{2}\beta + \frac{2\gamma}{\beta}$ for $m_1=m_2=2$; 
	\item $a > \frac{9}{8}\beta$ and $8\beta a^3-3(8\gamma+3\beta^2)a^2+3\gamma\beta a-\gamma^2>0$ for $m_1=m_2=3$;
	\item $2a(a+r)>\Big(a(a+\alpha b)+\gamma\Big)+\frac{4a^2\gamma}{(a(a+\alpha b)+\gamma)}$ for $m_1=1$ and $m_2=2$; 
	\item $a > \frac{1}{2}\beta+ \frac{2\gamma}{\beta}-\alpha b$ for $m_1=2$ and $m_2=1$.
\end{enumerate}
\end{prop}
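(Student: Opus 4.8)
The plan is to derive, for each pair $(m_1,m_2)$, the polynomial characteristic equation obtained by clearing denominators in~\eqref{eq:erl:dow} with $\sigma=0$, and then to apply the Routh--Hurwitz criterion to the resulting real polynomial in $\lambda$. Since $\sigma=0$, multiplying $W(\lambda)$ by $(a+\lambda)^{\max(m_1,m_2)}$ produces a genuine polynomial $P(\lambda)$ of degree $2+\max(m_1,m_2)$ whose roots are exactly the roots of $W$ together with possibly $\lambda=-a$ (which is harmless, as $-a<0$). Local asymptotic stability of the trivial steady state is equivalent to all roots of $P$ having negative real parts, so the whole proposition reduces to writing down the Routh--Hurwitz inequalities for each of the five polynomials and simplifying them using the abbreviations $\beta=r+\alpha b$ and $\gamma=\tfrac{2r}{3}(b-\mu)$ from~\eqref{def:betagamag}. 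The assumption $b>\mu$ guarantees $\gamma>0$, and $r,a,b>0$ give $\beta>0$; these sign conditions are what let the messy Hurwitz determinant inequalities collapse to the clean forms stated in (i)--(v).

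Concretely, I would handle the cases in order of increasing degree. For (i), $m_1=m_2=1$: multiply through by $(a+\lambda)$ to get a cubic $\lambda^3 + \text{(lower order)}$; the three Routh--Hurwitz conditions for a cubic $\lambda^3+c_2\lambda^2+c_1\lambda+c_0$ are $c_2,c_0>0$ and $c_1c_2>c_0$, and one checks that the first two are automatic while the third is precisely $a\beta>\gamma$. For (v), $m_1=2$, $m_2=1$: here $\max=2$, so multiply by $(a+\lambda)^2$ to obtain a quartic; apply the quartic Routh--Hurwitz conditions (positivity of all coefficients plus the single determinant inequality $c_3c_2c_1 > c_1^2 + c_3^2 c_0$). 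For (ii) $m_1=m_2=2$ and (iv) $m_1=1,m_2=2$ one again gets a quartic and proceeds identically; note (iv) keeps $a(a+\alpha b)+\gamma$ unexpanded, so I would introduce that quantity as a single symbol before imposing the determinant inequality. For (iii) $m_1=m_2=3$ one gets a quintic $\lambda^5+\dots$, and the Routh--Hurwitz test for degree five yields two nontrivial inequalities beyond coefficient positivity: the first simplifies to $a>\tfrac98\beta$ and the second to the stated cubic $8\beta a^3 - 3(8\gamma+3\beta^2)a^2 + 3\gamma\beta a - \gamma^2>0$.

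In every case the routine bookkeeping is: expand $(a+\lambda)^{m}\,W(\lambda)$, read off the coefficients $c_k$ as polynomials in $a,\beta,\gamma$ (using that the combinations $r$ and $\alpha b$ enter only through $\beta$ except where they appear separately, as in (iv) and (v), because $a+r$ and $a+\alpha b$ come from the two different denominators $(a+\lambda)^{m_1}$ and $(a+\lambda)^{m_2}$), verify $c_k>0$ using $a,r,b,\beta,\gamma>0$ and $b>\mu$, and then compute the relevant Hurwitz minors. I would present one case in full detail (say (i) or (v)) and indicate that the others follow the same template, relegating the heaviest algebra to a remark or simply stating that it was verified by direct computation.

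The main obstacle I anticipate is purely computational rather than conceptual: the quintic case (iii) produces Hurwitz determinants that are degree-three and degree-four polynomials in $a$ with coefficients built from $\beta$ and $\gamma$, and it takes care to factor these so that the two independent conditions separate cleanly into ``$a>\tfrac98\beta$'' plus ``cubic $>0$'' rather than one unwieldy combined inequality. A secondary subtlety is making sure that the extra root $\lambda=-a$ introduced by clearing denominators (when $m_1\neq m_2$, as in (iv) and (v)) does not spuriously affect the count --- but since $a>0$ this root is always in the open left half-plane, so it causes no trouble; I would remark on this once to justify passing freely between $W$ and its polynomial multiple.
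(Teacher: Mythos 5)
Your proposal follows exactly the paper's own route: for $\sigma=0$ the characteristic quasi-polynomial reduces, after multiplication by $(a+\lambda)^{\max\{m_1,m_2\}}$, to a genuine polynomial of degree $2+\max\{m_1,m_2\}$, and each of conditions (i)--(v) is then read off from the Routh--Hurwitz criterion, using $b>\mu$ to secure positivity of the coefficients. Your write-up is in fact more explicit than the published proof, which merely lists the five polynomials and asserts that the (sometimes tedious) Hurwitz computations yield the stated inequalities; your remarks on the quartic/quintic determinant conditions and on the harmlessness of the spurious factor at $\lambda=-a$ are correct.
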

\begin{proof}
For $\sigma=0$ we clearly have
\[
	W(\lambda) = \lambda^2 +\lambda\left(r\frac{a^{m_1}}{(a+\lambda)^{m_1}}+\alpha b\frac{a^{m_2}}{(a+\lambda)^{m_2}}
\right)
+ \frac{2r}{3}(b-\mu) \frac{a^{m_1}}{(a+\lambda)^{m_1}}\,.
\]
As it can be seen, the advantage of using the Erlang distributions (not separated from zero) is that instead of studying existence of
the zeros of  the characteristic function one can
 study existence of roots of a~polynomial, 
thus the stability analysis is easier.

First, consider $m_1=m_2=m$. We investigate the behaviour of the roots of polynomial
\begin{equation}\label{eq:Erl:m=1}
	 \lambda^2(a+\lambda)^m + a^m (\lambda\,\beta + \gamma),
\end{equation}
where $\beta$ and $\gamma$ are given by~\eqref{def:betagamag}.

The Routh-Hurwitz stability criterion (see eg.~\cite{rh-criterion}) gives that the necessary and sufficient conditions 
for the stability of trivial steady state of system~\eqref{sys:lin}. 
Clearly, the degree of polynomial~\eqref{eq:Erl:m=1} is larger for larger $m$. However, in each case of equals 
$m$'s (sometimes tedious) algebraic calculations lead to some conditions  that guarantee the stability of the considered steady state.

For $m=1$ (i.e. for the exponential distribution not separated from zero) we have 
a~polynomial of degree three
\[ \lambda^2(a+\lambda) + a~(\lambda\,\beta + \gamma)\,,\]
 while for $m=2$  we have 
\[
\lambda^4+2a\,\lambda^3+a^2\,\lambda^2+\beta a^2\, \lambda+\gamma a^2\,.
\]
The case $m=3$, is the most complicated since a~direct calculation shows that 
\eqref{eq:Erl:m=1} 
is the polynomial of degree 5
\[
\lambda ^5 +3 a~\lambda^4+3 a^2 \lambda ^3  +a^3 \lambda ^2+a^3 \beta  \lambda+a^3 \gamma\,.
\]
For $m_1=1$ and $m_2=2$, polynomial~\eqref{eq:Erl:m=1} reads 
\[
	\lambda^4 + 2a\,\lambda^3+a(a+r)\,\lambda^2+\Bigl(a^2\beta+\gamma a\Bigr)\,\lambda+
		\gamma a^2\,,
\]
while for $m_1=2$ and $m_2=1$ we have
\[
	\lambda^4+ 2a\,\lambda^3 + a(a+\alpha b)\,\lambda^2+a^2\beta\,\lambda +\gamma a^2\,.
\]
\end{proof}

\begin{prop}
	For $b>\mu$, $\sigma=0$, $m_1=1$, $m_2=2$ and 
	\begin{equation}\label{cond:prop}
	r<\alpha b \quad \text{ or } \quad r > \frac{\alpha b}{2} + \frac{2\gamma}{\alpha b},
	\end{equation}
	there exists $\bar a>0$ such that for $a>\bar a$ the trivial steady state of system~\eqref{model_resc_suma} with the
	non-shifted Erlang probability densities given by~\eqref{eq:Erlang}--\eqref{eq:defE}  is locally asymptotically 
	stable and it is unstable for $a\in(0,\bar a)$. 
\end{prop}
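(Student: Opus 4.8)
The plan is to reduce local stability to the sign of a single cubic polynomial in the parameter $a$ and then to read off the threshold $\bar a$ as the unique positive root of that cubic. First I would recall from the proof of Proposition~\ref{thm:non-shiftE} that, for $\sigma=0$, $m_1=1$, $m_2=2$, multiplying $W(\lambda)$ by $(a+\lambda)^2$ yields the quartic $\lambda^4+c_1\lambda^3+c_2\lambda^2+c_3\lambda+c_4$ with $c_1=2a$, $c_2=a(a+r)$, $c_3=a^2\beta+\gamma a$, $c_4=\gamma a^2$, where $\beta,\gamma$ are as in~\eqref{def:betagamag}. Since $\alpha b>0$ (otherwise~\eqref{cond:prop} is vacuous), $\lambda=-a$ is not a root of this quartic, so the roots of $W$ are exactly the four roots of the quartic and local stability of $(0,0)$ is equivalent to the Routh--Hurwitz conditions for it. Because $b>\mu$ gives $\gamma>0$ and $r>0$ gives $\beta>0$, every $c_j$ is positive for $a>0$; the remaining Routh--Hurwitz conditions are $\Delta_2:=c_1c_2-c_3>0$ and $\Delta_3:=c_3\Delta_2-c_1^2c_4>0$, and $\Delta_3>0$ already forces $\Delta_2>0$ (since $c_3,c_4>0$). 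Hence local asymptotic stability holds if and only if $\Delta_3>0$.

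Next I would expand $\Delta_3$ and extract the factor $a^2$, obtaining $\Delta_3=a^2Q(a)$ with $Q(a)=2\beta a^3+\bigl(r^2-(\alpha b)^2-2\gamma\bigr)a^2-2\alpha b\,\gamma\,a-\gamma^2$. Since $Q(0)=-\gamma^2<0$ and the leading coefficient $2\beta$ is positive, $Q$ has at least one positive root and is eventually positive. The core of the argument is to prove that under~\eqref{cond:prop} this positive root is \emph{unique}: I would run a sign-change (Descartes) analysis on the coefficient sequence of $Q$, in which the constant term $-\gamma^2$ and the linear coefficient $-2\alpha b\gamma$ are both non-positive, so that a single sign change occurs once the sign of the quadratic coefficient $r^2-(\alpha b)^2-2\gamma$ is controlled; this is exactly the point at which~\eqref{cond:prop} enters (the alternative $r<\alpha b$ making $r^2-(\alpha b)^2<0$, the alternative $r>\tfrac{\alpha b}{2}+\tfrac{2\gamma}{\alpha b}$ covering the complementary range). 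One then concludes that $Q$ has exactly one positive root $\bar a$, which is simple, with $Q<0$ on $(0,\bar a)$ and $Q>0$ on $(\bar a,+\infty)$.

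Finally I would translate this back to the claim: for $a>\bar a$ we have $\Delta_3=a^2Q(a)>0$, so all Routh--Hurwitz conditions hold and $(0,0)$ is locally asymptotically stable; for $a\in(0,\bar a)$ we have $\Delta_3<0$, which violates a Routh--Hurwitz condition, hence $(0,0)$ is unstable; and $\bar a>0$ since $Q(0)<0$. (At $a=\bar a$, where $\Delta_3=0$, the steady state fails to be asymptotically stable, but this is a single point.)

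The step I expect to be the main obstacle is the second one: carrying out the expansion of $\Delta_3$ correctly to isolate $Q$, and --- the genuinely delicate part --- verifying that~\eqref{cond:prop} is precisely what guarantees the coefficient sequence of $Q$ changes sign only once, i.e.\ that no spurious second positive root of $Q$, and hence no extra stability switch in $a$, can occur. The standing positivity facts $\beta>0$, $\gamma>0$ and $\alpha b>0$ are used repeatedly and should be recorded at the outset.
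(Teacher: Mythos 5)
Your proposal is correct, and it follows the paper's overall strategy (Routh--Hurwitz reduction to the sign of a polynomial in $a$, followed by Descartes' rule of signs), but the polynomial you land on is not the one the paper uses, and the comparison is instructive. The paper multiplies out its stability condition (iv) of Proposition~\ref{thm:non-shiftE} and works with the quartic $W_a(a)=a^4+2ra^3+\bigl(\alpha b(2r-\alpha b)-4\gamma\bigr)a^2+2\gamma(r-\alpha b)a-\gamma^2$, for which the hypothesis \eqref{cond:prop} is genuinely needed to force a single sign change (otherwise the pattern $+,+,-,+,-$ can occur). Your computation $\Delta_3=a^2Q(a)$ with $Q(a)=2\beta a^3+\bigl(r^2-\alpha^2b^2-2\gamma\bigr)a^2-2\alpha b\gamma a-\gamma^2$ checks out: expanding $c_1c_2c_3-c_3^2-c_1^2c_4$ with $c_1=2a$, $c_2=a(a+r)$, $c_3=a^2\beta+\gamma a$, $c_4=\gamma a^2$ gives exactly this, using $\beta(2r-\beta)=r^2-\alpha^2b^2$ and $2\gamma(r-\beta)=-2\alpha b\gamma$. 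The step you single out as the delicate one is in fact immediate: since $2\beta>0$, $-2\alpha b\gamma\le 0$ and $-\gamma^2<0$, the coefficient sequence of $Q$ has exactly one sign change \emph{regardless} of the sign of $r^2-\alpha^2b^2-2\gamma$, so $Q$ has exactly one simple positive root and hypothesis \eqref{cond:prop} is never actually invoked on your route. The two polynomials are not equivalent (their positive roots differ numerically); the discrepancy traces back to the paper's condition (iv), where $a(a+\alpha b)+\gamma$ should read $a(r+\alpha b)+\gamma=a\beta+\gamma$, which is the correct value of $c_3/a$, so your cubic is the reduction of the correct Hurwitz determinant and your argument in fact establishes the conclusion under weaker hypotheses. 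The remaining steps --- discarding $\lambda=-a$, reading instability off $\Delta_3<0$, and the simplicity of $\bar a$ --- are fine.
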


\begin{proof}
Since $a(a+\alpha b)+\gamma>0$ holds, the stability condition for the case $m_1=1$, $m_2=2$ 
(the condition (ii) of Theorem~\ref{thm:non-shiftE}) is equivalent to
\[
W_a(a) = a^4+2r\, a^3+\Bigl(\alpha b(2r-\alpha b)-4\gamma\Bigr)\,a^2 +2\gamma(r-\alpha b)\,a - \gamma^2>0. 
\]
The Descart's rule of signs implies that if one of conditions~\eqref{cond:prop} holds,  
then there exists exactly one simple positive real root of $W_a(a)$. We denote it by $\bar a$.  Then
the trivial steady state system~\eqref{model_resc_suma}  is locally asymptotically for $a>\bar a$ and unstable for $0<a<\bar a$. 
\end{proof}

\begin{thm}\label{thm:Erlang}
Considered system~\eqref{model_resc_suma} with the shifted Erlang probability densities given by~\eqref{eq:Erlang}--\eqref{eq:defE}. 

\begin{enumerate}[\upshape (a)]
	\item Let $m_i=m$, $i=1,2$. If the trivial steady state is
	\begin{enumerate}[\upshape (i)]
		\item unstable for $\sigma=0$,  then it is unstable for any $\sigma>0$,
		\item locally asymptotically stable for $\sigma=0$, then there exits $\sigma_0>0$ such that 
		it is locally stable for all $\sigma\in[0,\sigma_0)$ and unstable for $\sigma>\sigma_0$.
	 	At $\sigma=\sigma_0$ the Hopf bifurcation occurs.
	\end{enumerate}
	\item Let $m_1=1$, $m_2=2$.  If the trivial steady state is locally asymptotically stable for $\sigma=0$, 
	then there exists exactly one $\sigma_0>0$ such that  for $\sigma \in [0,\sigma_0)$ it is locally asymptotically stable and it is unstable for $\sigma>\sigma_0$. 
	At $\sigma=\sigma_0$ the Hopf bifurcation occurs.
	\item Let $m_1=2$, $m_2=1$.
	\begin{enumerate}[\upshape (i)]
		\item If the steady state is locally asymptotically stable for $\sigma=0$, 
			and the function 
				\begin{equation}\label{eq:aux:sigma3}
					F(u)=u^4 +2a^2u^3+ u^2a^2\bigl(a^2-\alpha^2 b^2\bigr)-
						u a^3\Biggl( a\beta^2 - 2\alpha b\gamma\Biggr)
						- \gamma^2 a^{4}.
				\end{equation}
		has no positive multiple roots, 
		 then the steady state is locally asymptotically stable for some $\sigma \in [0,\sigma_0)$
				and it is unstable for $\sigma>\bar\sigma$, with $\sigma_0\le \bar\sigma$. 
				At $\sigma=\sigma_0$ the Hopf bifurcation occurs.
		\item If
				\begin{equation}\label{cond:stab:m11m22}
					a \ge \alpha b\min\left\{1,\frac{2 \gamma}{\beta^2}\right\},
				\end{equation}
				 or 
				\begin{equation}\label{cond:stab:m11m22-verA}
					a < \alpha b\min\left\{1,\frac{2 \gamma}{\beta^2}\right\}
					\; \text{ and } \; 
					\Bigl(a^2+2\alpha^2 b^2\Bigr)^3 <27\Bigl(2\alpha b\gamma+ a\bigl(\alpha^2b^2-\beta^2\bigr)\Bigr)^2,
				\end{equation}
				then at most one stability switch of the steady state is possible.  Moreover, if the 	
				steady state is locally asymptotically
				stable for $\sigma=0$, then it is stable for some $\sigma \in [0,\sigma_0)$
				and it is unstable for $\sigma>\sigma_0$, 
			 For $\sigma=\sigma_0$ the Hopf bifurcation is observed.
	\end{enumerate}
\end{enumerate}
\end{thm}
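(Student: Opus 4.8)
The plan is to handle all three cases within one framework. First I would multiply the characteristic function $W$ by $(a+\lambda)^{M}$, with $M=m$ in case (a) and $M=2$ in cases (b)--(c), so that the characteristic equation becomes $P(\lambda)+Q(\lambda)\e^{-\lambda\sigma}=0$ with
\[
P(\lambda)=\lambda^{2}(a+\lambda)^{M},\qquad
Q(\lambda)=\begin{cases}
a^{m}(\beta\lambda+\gamma), & \text{case (a)},\\
a\bigl(r\lambda^{2}+(a\beta+\gamma)\lambda+a\gamma\bigr), & \text{case (b)},\\
a\bigl(\alpha b\,\lambda^{2}+a\beta\lambda+a\gamma\bigr), & \text{case (c)}.
\end{cases}
\]
Since $P(0)=0$ while $Q(0)=a^{M}\gamma\neq0$ (here $\gamma>0$ because $b>\mu$), $\lambda=0$ is never a root, so stability can change only when a conjugate pair $\lambda=\pm i\omega$, $\omega>0$, crosses the imaginary axis; and, the equation being of retarded type, the number of roots with $\Re\lambda>0$ is finite and changes only at such crossings. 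A value $i\omega$ is a root for some $\sigma\ge0$ exactly when $|P(i\omega)|=|Q(i\omega)|$, i.e. when $F(u)=|P(i\sqrt u)|^{2}-|Q(i\sqrt u)|^{2}=0$ with $u=\omega^{2}>0$, in which case the admissible delays form a sequence $\sigma_{n}=(\theta_{*}+2\pi n)/\omega$, $n\ge0$, determined by $\e^{i\theta_{*}}=-P(i\omega)/Q(i\omega)$. Differentiating $P+Q\e^{-\lambda\sigma}=0$ and computing $\bigl(\dd\lambda/\dd\sigma\bigr)^{-1}$ at such a root gives the key identity $\sign\bigl(\dd(\Re\lambda)/\dd\sigma\bigr)=\sign F'(u)$; in particular the crossing is transversal whenever $F'(u)\neq0$, and then the imaginary root is simple, so the whole problem reduces to the geometry of $F$ on $(0,\infty)$.

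In every case $F(0)=-a^{2M}\gamma^{2}<0$ and $F(u)\to+\infty$, hence $F$ has at least one positive root; controlling their number is the heart of the matter. For (a), $F(u)=u^{2}(a^{2}+u)^{m}-a^{2m}(\beta^{2}u+\gamma^{2})$ and $F'(u)=(a^{2}+u)^{m-1}\bigl[(m+2)u^{2}+2a^{2}u\bigr]-a^{2m}\beta^{2}$, which is a nonnegative increasing function minus a constant, so $F$ is convex on $[0,\infty)$; with $F(0)<0$ this forces a unique, simple positive root $u_{*}$ with $F'(u_{*})>0$. Thus every imaginary crossing is destabilising, which yields (a)(i) (the right half-plane root count is nondecreasing in $\sigma$, so an unstable equilibrium stays unstable) and (a)(ii) (stable until the first crossing $\sigma_{0}>0$, a Hopf bifurcation there because the pair is simple and transversal and no other imaginary roots exist, then permanently unstable). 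For (b) the polynomial $F$ has degree four, but one computes $F'(0)=a^{2}\bigl(2ar\gamma-(a\beta+\gamma)^{2}\bigr)<0$ since $(a\beta+\gamma)^{2}\ge(ar+\gamma)^{2}>2ar\gamma$; splitting on the sign of $F''(0)=2a^{2}(a^{2}-r^{2})$ one checks $F$ is decreasing then increasing on $[0,\infty)$, so again there is exactly one positive root, simple, with $F'(u_{*})>0$, and (b) follows just as in (a)(ii).

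Case (c) carries the real work, since here $F$ is the quartic \eqref{eq:aux:sigma3} and, with $F(0)<0$, has either one or three positive roots. If there are no positive multiple roots and only one positive root, the argument of (b) applies verbatim; if there are three simple positive roots $u_{1}<u_{2}<u_{3}$, the signs of $F'$ alternate $+,-,+$, so the crossings from $u_{1},u_{3}$ are destabilising and those from $u_{2}$ stabilising. The first crossing $\sigma_{0}$ is necessarily destabilising (the count cannot drop below zero), giving stability on $[0,\sigma_{0})$ and a Hopf bifurcation at $\sigma_{0}$; moreover, in any interval $[0,S]$ the number of crossings coming from $u_{j}$ grows like $S\omega_{j}/2\pi$, and since $\omega_{1}+\omega_{3}>\omega_{2}$ the right half-plane root count tends to $+\infty$, so $\bar\sigma:=\sup\{\sigma\ge0:\text{the count vanishes}\}$ is finite and the equilibrium is unstable for $\sigma>\bar\sigma$ with $\sigma_{0}\le\bar\sigma$; this is (c)(i). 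For (c)(ii) the task is to show that the listed conditions force $F$ to have a single positive root: if $a^{2}\ge\alpha^{2}b^{2}$ then the coefficient of $u^{2}$ in $F$ is nonnegative, so $F$ is convex on $(0,\infty)$, while if $a\beta^{2}\ge2\alpha b\gamma$ then $F'(0)\le0$ and $F$ is decreasing-then-increasing — the two alternatives covered by \eqref{cond:stab:m11m22}. In the remaining regime both $F'(0)>0$ and the coefficient of $u^{2}$ is negative (this is the first inequality in \eqref{cond:stab:m11m22-verA}), and depressing the cubic $F'$ produces exactly the coefficients $p=-\tfrac{a^{2}}{4}\bigl(a^{2}+2\alpha^{2}b^{2}\bigr)$ and $q=\tfrac{a^{3}}{4}\bigl(2\alpha b\gamma+a(\alpha^{2}b^{2}-\beta^{2})\bigr)$, so that $F'$ has a unique real root precisely when $-4p^{3}-27q^{2}<0$, which is exactly the second inequality in \eqref{cond:stab:m11m22-verA}; that root is negative because $F'(0)>0$, so $F$ is increasing on $[0,\infty)$ with a single simple positive root. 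In all these sub-cases $F'(u_{*})>0$, so at most one stability switch occurs, it is destabilising, and a Hopf bifurcation takes place at $\sigma_{0}$ — giving (c)(ii). The main obstacle is precisely this last analysis: matching the opaque inequalities \eqref{cond:stab:m11m22}--\eqref{cond:stab:m11m22-verA} to the sharp condition for uniqueness of the positive root of the quartic $F$ (equivalently, for monotonicity of the cubic $F'$ on $[0,\infty)$), together with the crossing-count bookkeeping needed in (c)(i).
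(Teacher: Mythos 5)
Your proposal is correct and follows essentially the same route as the paper: reduce the characteristic equation to $P+Q\e^{-\lambda\sigma}$, form the auxiliary polynomial $F(u)$ from $|P(i\omega)|^2-|Q(i\omega)|^2$, control the number and simplicity of its positive roots (Descartes/convexity/monotonicity of $F'$), and invoke the Cooke--van den Driessche crossing-direction result. The only notable variation is in case (c)(ii) under \eqref{cond:stab:m11m22-verA}, where you verify monotonicity of $F$ via the discriminant of the depressed cubic $F'$ rather than, as the paper does, by locating the minimum of $F'$ on $[0,+\infty)$ after rescaling $u=a\zeta$; both computations yield the identical inequality, so this is an equivalent (and arguably cleaner) verification rather than a different approach.
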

\begin{proof}
	Let $m_M = \max\{m_1,m_2\}$. Then the characteristic function~\eqref{eq:erl:dow} has the following form 
	\[
		\begin{split}
		W(\lambda) =& \frac{1}{(a+\lambda)^{m_M}}\Biggl(
				\lambda^2(a+\lambda)^{m_M} +\\ &\quad  + \biggl(\lambda(ra^{m_1}(a+\lambda)^{m_M-m_1} 
				+\alpha ba^{m_2}(a+\lambda)^{m_M-m_2})
				+ \gamma a^{m_1}(a+\lambda)^{m_M-m_1}\biggr)\e^{-\lambda\sigma}\Biggr).
		\end{split}
	\]
	The zeros of $W(\lambda)$ are the same as the zeros of 
	\begin{equation}\label{eq:chr:max}
		D(\lambda) = \lambda^2(a+\lambda)^{m_M} + \biggl(\lambda(ra^{m_1}(a+\lambda)^{m_M-m_1} 
				+\alpha ba^{m_2}(a+\lambda)^{m_M-m_2})
				+ \gamma a^{m_1}(a+\lambda)^{m_M-m_1}\biggr)\e^{-\lambda\sigma}.
	\end{equation}
	For $\lambda=i\omega$ define an auxiliary function
	\begin{equation}\label{eq:aux:ogolnie}
	 F(\omega) = \omega^4(a^2+\omega^2)^{m_M} - 
		\left\|i\omega(ra^{m_1}(a+i\omega)^{m_M-m_1} 
				+\alpha ba^{m_2}(a+i\omega)^{m_M-m_2})
				+ \gamma a^{m_1}(a+i\omega)^{m_M-m_1} \right\|^2.
	\end{equation}
	In the following we show, with one exception, that there exists a~unique 
	single positive zero
	 $\omega_0=\sqrt{u_0}$ of the function $F$ such that $F'(\omega_0)>0$.
	This, together with  Theorem~1 from~\cite{cook86ekvacioj} allows us to deduce that 
	the zeros of $D$ crosses imaginary axes from left to right with a~positive velocity. 
	This yields the existence of stability switches and the occurrence of the Hopf bifurcation
	in the appropriate cases.
	
	For $m_1=m_2=m$, and for $u=\omega^2$ the auxiliary function $F$ takes the form
	\begin{equation}\label{eq:Fm}
	F(u)=u^2\Bigl(a^2+u\Bigr)^{m}-a^{2m}(u\beta^2+\gamma^2)\,.
	\end{equation}
	We are interested in the existence of the real positive roots of~\eqref{eq:Fm}.
 Because the number of sign changes 
	between consecutive non-zero coefficients is equal to 1
the Descartes' rule of signs indicates that the polynomial~\eqref{eq:Fm} has one simple real positive root 
	denoted by $u_0$. Clearly, the fact that coefficient of $u^{2+m}$ is positive and $F(0)<0$, implies $F'(u_0)\ge 0$. 

	Moreover, 
	\[
	F'(u)=2u\Bigl(a^2+u\Bigr)^{m}+ mu^2(a^2+u)^{m-1}-a^{2m}\beta^2
	\]
	and thus, $F''(u)>0$.  Hence, $F'(u_0)>0$. This completes the proof of part (a).

	For $m_1=1$, $m_2=2$ and $u=\omega^2$ the function $F$ given by \eqref{eq:aux:ogolnie}  has the form
	\begin{equation}\label{eq:aux:nierowne}
				 F(u) =  u^4 
			+ 2 a^2\, u^3 
			+ a^2\Bigl(a^2-r^2\Bigr)\,u^2 
			- a^2\left(a^2\beta^2 + 2a~\alpha b \gamma
				+\gamma^2
				\right) \, u
			- 	a^4\gamma^2\,.
	\end{equation}
	We show that the coefficient of $u$ in~\eqref{eq:aux:nierowne}  is negative and hence the Descartes' rule of signs 
	implies that a~single
	stability switch for the trivial steady state of~\eqref{model_resc_suma} occurs. This is 
	equivalent to the inequality 
	\[
		G(a) = a^2\beta^2 + 2a~\alpha b \gamma +\gamma^2>0,
	\]
	for $a\ge 0$. For $b>\mu$ (that is the condition required for the existence of the trivial steady state of~\eqref{model_resc_suma}) 
	the discriminant of $G$ is the following 
	\[
		4\gamma^2\left(\alpha^2 b^2 - \beta^2\right) = 4\gamma^2\left(\alpha^2 b^2 - (r+\alpha b)^2\right)=
		 -4r\gamma^2 \left(r+2\alpha b\right) < 0.
	\]
	Thus, $G(u)>0$, so $F$ has exactly one simple positive root $u_0$. Moreover, 
	$F(0)<0$ hence	$F'(u_0)>0$ and the proof of part (b) is completed. 

	Consider the  case: $m_1=2$ and $m_2=1$. Then for $u=\omega^2$ Eq.~\eqref{eq:aux:ogolnie} takes 
	the form~\eqref{eq:aux:sigma3}
	First, note that for the auxiliary function given by \eqref{eq:aux:sigma3} inequality $F(0)<0$, holds. 
	Thus,  $F$ has at least one real positive zero. Hence, if $F$ has only simple positive zero, by Theorem~1 
	from~\cite{cook86ekvacioj} the steady state 	of system~\eqref{model_resc_suma} is unstable for sufficiently large 
	$\sigma$, which completes the proof of part (c.i).

	Now, we prove statement (c.ii). Assume that~\eqref{cond:stab:m11m22} holds. Then, if $a\ge \alpha b$, then the 
	coefficient of $u^2$ is non-negative and independently on the sign of 
	the coefficient of $u$ the Descartes' rule of signs indicates that there is exactly one simple positive 
	real zero of $F(u)$. Thus, at most one stability switch of the steady state can occur. On the other hand,  
	if condition  $a\ge2\alpha\gamma b/\beta^2$ holds, then the coefficient of $u$ in $F(u)$ is non-positive. Hence, 
	independently of the sign of the coefficient of $u^2$ the single change of the sign is observed. Thus,  $F(u)$ has 
	exactly one simple real positive zero.
	
	Now assume that~\eqref{cond:stab:m11m22-verA} holds. To shorten notation denote
	\begin{equation}\label{def:alpha12}
		\alpha_1 = 2\alpha b\gamma-  a\beta^2, \quad\quad
		\alpha_2 = \alpha^2 b^2-a^2. \quad 
	\end{equation}
	The first inequality of~\eqref{cond:stab:m11m22-verA} is equivalent to $\alpha_1>0$ and $\alpha_2>0$. 
	Denote $F_1(\zeta) = F(a\zeta)/a^4$, with $\zeta=u/a$. Clearly, $F(u)=0$ is equivalent 
	to $F_1(\zeta)=0$. The function $F_1$ reads
	\[
		F_1(\zeta) = \zeta^4 +2a \zeta^3-\alpha_2 \zeta^2 + \zeta \alpha_1 - \gamma^2.
	\]
	We show that under the assumption~\eqref{cond:stab:m11m22-verA} the function $F_1$ is 
	a strictly monotonic function of $\zeta$, which implies that $F$ is a~strictly monotonic function of $u$. 
	This, together with the fact $F(0)<0$ implies that $F$ has exactly one simple positive zero so the assertion 
	of the point (c.ii) is true. 

	In order to show monotonicity of $F_1$, we prove that its first derivative is positive. We have 
	\[
		F_1'(\zeta) = 4\zeta^3 +6a \zeta^2-2\alpha_2 \zeta + \alpha_1.
	\]
	The assumption $\alpha_1>0$ implies $F_1'(0)>0$. Now, we derive a~condition that guarantees 
	the positivity of $F_1'$ for all $\zeta>0$. To this end, we calculate the minimum of $F_1'(\zeta)$ 
	on the 
	interval $[0,+\infty)$. Calculating the second derivative of $F_1$ we find that the minimum of $F_1'$
	is reached at the point 
	\[
		\bar \zeta = \frac{1}{2}\biggl(-a+\sqrt{a^2+\tfrac{2}{3}\alpha_2}\biggr).
	\]
	Using the fact that $F_1''(\bar\zeta) = 0$ we have 
	\[
		F_1'(\bar \zeta) =  \alpha_1 - 2\bar\zeta^3-\alpha_2\bar \zeta.
	\]
	Thus, $F_1$ is strictly increasing for all $\zeta>0$ if and only if 
	\begin{equation}\label{war:sigma3}
		 \bigl(2\bar \zeta^2+\alpha_2\bigr)\bar \zeta< \alpha_1.
	\end{equation}
	A~few algebraic manipulations indicate  that~\eqref{war:sigma3} is equivalent to 
	\[
		\left(a^2+\frac{2}{3}\alpha_2\right)^{3/2} - a\Bigl(a^2+\alpha_2\Bigr)<\alpha_1.
	\]
	Using the definition of~$\alpha_2$ the above inequality reads
	\[
		\frac{1}{3\sqrt{3}}\Bigl(a^2+2\alpha^2 b^2\Bigr)^{3/2} <\alpha_1+ a~\alpha^2b^2.
	\]
	Due to the assumption $\alpha_1>0$ both sides of the above inequality are positive, 
	so squaring that inequality we obtain
	\[
		\Bigl(a^2+2\alpha^2 b^2\Bigr)^3 <27\Bigl(\alpha_1+ a~\alpha^2b^2\Bigr)^2.
	\]
	Now using the definition of $\alpha_1$ we get 
	\[
		\Bigl(a^2+2\alpha^2 b^2\Bigr)^3 <27\Bigl(2\alpha b\gamma+ a\bigl(\alpha^2b^2-\beta^2\bigr)\Bigr)^2,
	\]
	which is fulfilled due to the second inequality of~\eqref{cond:stab:m11m22-verA}. This 
	completes the proof of part (c).
\end{proof}

Note, that there exist a~set of parameters of system~\eqref{model_resc_suma} with the shifted Erlang probability densities given by~\eqref{eq:Erlang}--\eqref{eq:defE} with  
$m_1=2$, $m_2=1$ such that the steady state is locally asymptotically stable for $\sigma=0$ and  conditions 
\eqref{cond:stab:m11m22-verA} hold. As a~proper 
example we formulate the remark below.

\begin{rem}
	If
	\begin{equation}\label{notcontra}
		\alpha b<\beta, \quad \text{ and } \quad 
		1<\frac{2\gamma}{\beta^2},
	\end{equation}
	then for all $a>\frac{1}{2}\beta+\frac{2\gamma}{\beta}-\alpha b$ the steady state  of system~\eqref{model_resc_suma} with the shifted Erlang probability densities given 
	by~\eqref{eq:Erlang}--\eqref{eq:defE} with $m_1=2$, 
$m_2=1$	loses its stability at $\sigma=\bar\sigma$ and the Hopf bifurcation occurs at this point.
\end{rem}
\begin{proof}
	First, note that under the assumptions of the remark if $a\ge\alpha b \min\left\{1,\frac{2\gamma}{\beta^2}\right\}$, then condition
	\eqref{cond:stab:m11m22} holds and Theorem~\ref{thm:Erlang} implies the assertion of the remark. 
	If $a>\frac{1}{2}\beta+\frac{2\gamma}{\beta}-\alpha b$ and conditions \eqref{cond:stab:m11m22-verA}
	hold then Theorem~\ref{thm:Erlang} also implies the assertion of the remark. 
	We show that for some set of parameters these conditions can be fulfilled 
	simultaneously. 

	Consider $a<\alpha b \min\left\{1,\frac{2\gamma}{\beta^2}\right\}$. The second inequality of~\eqref{notcontra} implies that $\min\left\{1,\frac{2\gamma}{\beta^2}\right\}=1$.
	Hence, the stability condition of steady state for $\sigma=0$ and the first condition~\eqref{cond:stab:m11m22-verA} 
	reads
	\begin{equation}\label{zakresa}
		\frac{1}{2}\beta+\frac{2\gamma}{\beta}-\alpha b< a< \alpha b.
	\end{equation}
	First, we show for some parameters inequalities~\eqref{zakresa} give a~non empty set of $a$. This is true 
	if the following inequalities
	\begin{equation}\label{zakresalphab}
		\frac{1}{2}\beta+\frac{2\gamma}{\beta} < 2\alpha b< 2\beta
	\end{equation} 
	hold, where the second inequality in~\eqref{zakresalphab} is just the first one in~\eqref{notcontra}. 
	Inequalities~\eqref{zakresalphab} do not contradict each other
	if and only if
	\begin{equation}\label{zakresgamma}
		\frac{1}{2}\beta+\frac{2\gamma}{\beta} < 2\beta.
	\end{equation} 
	Thus, if $\frac{2\gamma}{\beta^2}<3/2$, then there is a~non-empty set of $\alpha b$ such that 
	inequalities~\eqref{zakresalphab} hold and for such $\alpha b$ 
	\eqref{zakresa} determines a~non-empty set of $a$.
	
	Now, we show that if~\eqref{notcontra} and the first inequality of~\eqref{cond:stab:m11m22-verA}
	hold then the second inequality of~\eqref{cond:stab:m11m22-verA} also holds. 
	
	Note that the assumption $\alpha b<\beta$  means that the right hand side of the second inequality 
	of~\eqref{cond:stab:m11m22-verA} is a~decreasing function of $a$ (of course we need to use here the first inequality 
	of~\eqref{cond:stab:m11m22-verA} that guarantees the  positivity of the expression under the second power), 
	while the left hand-side is an increasing function of $a$. 
	Since our assumptions imply that $a<\alpha b$, it is enough to check if 
	inequality~\eqref{cond:stab:m11m22-verA} holds for  $a=\alpha b$.

	Rearranging terms we obtain 
	\[
		27\alpha^6 b^6 <27\Bigl(\alpha^3b^3 + \alpha b\bigl(2\gamma-\beta^2\bigr)\Bigr)^2.
	\]
	The above inequality obviously holds as $2\gamma>\beta^2$, which completes the proof.
\end{proof}

For the	 general case of system~\eqref{model_resc_suma} with the shifted Erlang probability densities given 
by~\eqref{eq:Erlang}--\eqref{eq:defE}  the characteristic  function $D(\lambda)$ is given by~\eqref{eq:chr:max} and the auxiliary function 
$F$ is given by~\eqref{eq:aux:ogolnie}. It is easy to see that the highest power of $\omega$ is $4+2m_M$ and 
the coefficient of it is $1$. Moreover, we have $F(0)<0$.  Thus, there exists $\omega_0>0$ such that 
$F(\omega_0)=0$ and the roots cross the imaginary axis from the left to the right half-plane. 
Thus, using Theorem~1 from~\cite{cook86ekvacioj}, we can deduce the following

\begin{rem}
	If all roots of $F(\omega)$ given by~\eqref{eq:aux:ogolnie} are simple, and 
	the trivial steady state of system~\eqref{model_resc_suma} with the shifted Erlang probability densities given by~\eqref{eq:Erlang}--\eqref{eq:defE} is locally asymptotically stable for $\sigma =0$, then it loses its stability due to the Hopf bifurcation for some $\sigma_0>0$ 
	and it is unstable for $\sigma>\sigma_{\infty}$ with some $\sigma_{\infty}\ge \sigma_0$. 
\end{rem}

It seems that the case of multiple roots of $F(\omega)$ is non-generic, however we will not prove it here.

\subsubsection{Piecewise linear probability densities}
For the function defined by~\eqref{eq:zabek} we have 
\bigskip
\[
\int_0^{\infty} f_i(\tau)\e^{-\lambda\tau}\dd \tau = 
	\frac{\e^{-\lambda\sigma}}{\lambda^2\varepsilon^{2}} \left(\e^{\lambda\varepsilon}+\e^{-\lambda\varepsilon}-2\right)
	= \frac{2\e^{-\lambda\sigma}}{\lambda^2\varepsilon^{2}}\Bigl(\cosh(\lambda\varepsilon)-1\Bigr)
\]
and thus the characteristic function for the trivial steady state of system~\eqref{model_resc_suma} reads
\[
	W(\lambda) = \lambda^2 +\lambda \int_0^{\infty}\Bigl(rf_1(\tau)+\alpha b  f_2(\tau)\Bigl) \e^{-\lambda\tau}\dd \tau
			+\gamma\int_0^{\infty} f_1(\tau) \e^{-\lambda\tau}\dd \tau\,,
\]
which is equivalent to
\begin{equation}\label{eq:char:zabek:og}
	W(\lambda) = \lambda^2 +(\lambda r+\gamma)\frac{2\e^{-\lambda\sigma}}{\lambda^2\varepsilon{^2}}\Bigl(\cosh(\lambda\varepsilon)-1\Bigr)+\lambda\alpha b \frac{2\e^{-\lambda\sigma}}{\lambda^2\varepsilon{^2}}\Bigl(\cosh(\lambda\varepsilon)-1\Bigr).
\end{equation}
Finding zeros of function~\eqref{eq:char:zabek:og} is not a~trivial task.
If  the characteristic function has the form $W(\lambda)=P(\lambda)+Q(\lambda)\e^{-\lambda\tau}$, 
where $P$ and $Q$ are polynomials,
one could use the Mikhailov Criterion~\cite{uf04jbs},  to 
estimate   the number of zeros of the characteristic function lying in the right half of complex plane. 
However, in the considered case we do not have strict polynomials. Therefore, we use a~generalised  Mikhailov 
Criterion (for details see~\ref{appen}).  Clearly,  $\cosh(z)=\sum_{n=0}^{\infty}\frac{z^{2n}}{(2n)!}$, for $z\in\C$ 
is an analytic function, hence $\frac{\cosh(z)-1}{z^2}$ is also analytic.
Consequently, the function 
$W$ defined by~\eqref{eq:char:zabek:og} is also analytical. Moreover, obviously condition~\eqref{charfun} holds. Thus, we can apply the generalized  Mikhailov Criterion.

Considering  $\sigma\geq\varepsilon>0$,  we have
\begin{equation}\label{char:zabek:sigmy}
	W(\lambda) = \lambda^2 +\Big(\lambda \beta +
						\gamma\Big)g\bigl(\lambda\varepsilon\bigr)\e^{-\lambda\sigma}\,,
\end{equation}
where  
\begin{equation}\label{def:betagamag1}
	g(x) = \frac{2(\cosh x-1)}{x^2}\,, \quad
	g_1(x) =g(ix) = \frac{2 - 2\cos x}{x^2} .
\end{equation}
Thus,
\begin{equation}\label{reWzabek}
	\Re (W(i\omega)) = -\omega^2 + g_1\bigl(\omega\varepsilon\bigr)\Big(\gamma\cos(\omega\sigma)+
		\beta\omega\sin(\omega\sigma)\Big)\,,
\end{equation}

\begin{equation}\label{imWzabek}
	\Im(W(i\omega)) = g_1\bigl(\omega\varepsilon\bigr)\Big( \beta \omega\cos(\omega\sigma)
			- \gamma\sin(\omega\sigma)\Big) \,.
\end{equation}

Here we formulate sufficient condition for stability of the trivial steady state of system~\eqref{model_resc_suma} 
for the piecewise linear distributions defined by~\eqref{eq:zabek} with $\sigma\geq\varepsilon>0$. This condition is clearly not necessary, what we will show numerically in the next section.

\begin{prop}\label{prop:zabek:stab}
	Let $b>\mu$, $\beta$ and $\gamma$ be defined by~\eqref{def:betagamag}. Then for
\begin{enumerate}[\upshape (i)]
\item	\begin{equation}\label{cond:stab:zabek}
		\sigma< \frac{\pi}{\beta+\sqrt{\beta^2+4\gamma}+\frac{\gamma}{\beta}\pi} 
	\end{equation}
	the trivial steady state of system~\eqref{model_resc_suma} with the piecewise linear distributions defined 
	by~\eqref{eq:zabek} and $\sigma\ge\varepsilon>0$ is locally asymptotically stable.

\item\begin{equation}\label{cond:nstab:zabeka}
	\frac{\beta}{\gamma}<\sigma < \frac{2\pi}{\beta+\sqrt{\beta^2+4\gamma}}
\end{equation}
the trivial steady state of system~\eqref{model_resc_suma} with  the  piecewise linear distributions defined by~\eqref{eq:zabek} 
with $\sigma\geq\varepsilon>0$ is unstable.
\end{enumerate}
\end{prop}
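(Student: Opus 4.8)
The plan is to apply the generalized Mikhailov Criterion (for details see~\ref{appen}) to the characteristic function in the form~\eqref{char:zabek:sigmy}, using the explicit real and imaginary parts~\eqref{reWzabek}--\eqref{imWzabek}. Since $\cosh$ is entire, $W$ is analytic, and because $\sigma\ge\varepsilon$ the delayed term $(\lambda\beta+\gamma)g(\lambda\varepsilon)\e^{-\lambda\sigma}$ tends to $0$ as $|\lambda|\to\infty$ in the closed right half-plane, so $W(\lambda)=\lambda^2\bigl(1+o(1)\bigr)$ there; hence the Criterion applies and the trivial steady state is asymptotically stable if and only if $W(i\omega)\ne0$ for all $\omega\ge0$ and the total increment of $\arg W(i\omega)$ over $\omega\in[0,+\infty)$ equals $\pi$, while it is unstable when $W$ has no imaginary zeros but this increment differs from $\pi$. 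Set $\omega_{\max}=\tfrac12\bigl(\beta+\sqrt{\beta^2+4\gamma}\bigr)$. Two estimates serve both parts. First, $0\le g_1\le1$, and since in both cases $\sigma<\pi/\omega_{\max}$ one has $\omega\varepsilon\le\omega\sigma<\pi$ and thus $0<g_1(\omega\varepsilon)\le1$ for $\omega\in(0,\omega_{\max}]$. Second, by Cauchy--Schwarz $\gamma\cos(\omega\sigma)+\beta\omega\sin(\omega\sigma)\le\sqrt{\gamma^2+\beta^2\omega^2}$, and since $\omega_{\max}^2=\tfrac12\beta\bigl(\beta+\sqrt{\beta^2+4\gamma}\bigr)+\gamma\ge\tfrac12\bigl(\beta^2+\sqrt{\beta^4+4\gamma^2}\bigr)$ we get $\omega^4>\gamma^2+\beta^2\omega^2$ for $\omega>\omega_{\max}$; combining these with $g_1\le1$ yields $\Re W(i\omega)<0$ for every $\omega>\omega_{\max}$ and $\Re W(i\omega_{\max})\le0$, so the hodograph lies in the open left half-plane for $\omega>\omega_{\max}$ and tends to $-\infty$. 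The argument count therefore reduces to $(0,\omega_{\max}]$, on which by~\eqref{imWzabek} the sign of $\Im W(i\omega)$ is that of $\beta\omega\cos(\omega\sigma)-\gamma\sin(\omega\sigma)$.

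For part~(i), assume~\eqref{cond:stab:zabek}; then $\sigma<\beta/\gamma$ and $\sigma<\pi/(2\omega_{\max})$, so $\omega\sigma\in(0,\pi/2)$ on $(0,\omega_{\max}]$. I would show $\Im W(i\omega)>0$ on all of $(0,\omega_{\max}]$, i.e.\ $\tan(\omega\sigma)<\beta\omega/\gamma$ there. Because $\omega\mapsto\tan(\omega\sigma)/\omega$ is increasing on $(0,\pi/(2\sigma))$, it suffices to check this at $\omega=\omega_{\max}$; using the sharp inequality $\tan\phi<\dfrac{\pi\phi}{\pi-2\phi}$ for $\phi\in(0,\pi/2)$ it is enough that $\dfrac{\pi\,\omega_{\max}\sigma}{\pi-2\omega_{\max}\sigma}\le\dfrac{\beta\omega_{\max}}{\gamma}$, and a short rearrangement turns this into $\sigma\bigl(\beta+\sqrt{\beta^2+4\gamma}+\tfrac{\pi\gamma}{\beta}\bigr)\le\pi$, which is precisely~\eqref{cond:stab:zabek}. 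Hence $\Im W(i\omega)>0$ on $(0,\omega_{\max}]$: the hodograph leaves $W(0)=\gamma>0$ into the open upper half-plane, and since $\Re W(i\omega)$ drops from $\gamma>0$ to a nonpositive value by $\omega=\omega_{\max}$, it crosses the positive imaginary axis; for $\omega>\omega_{\max}$ it stays in the left half-plane and tends to $-\infty$. The total increment of $\arg W(i\omega)$ is therefore $\pi$, so $W$ has no zeros with nonnegative real part and the steady state is asymptotically stable.

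For part~(ii), assume~\eqref{cond:nstab:zabeka}, so $\sigma>\beta/\gamma$ and $\sigma<\pi/\omega_{\max}$, whence $\omega\sigma\in(0,\pi)$ on $(0,\omega_{\max}]$. The plan is to show $\Im W(i\omega)<0$ on the whole interval $(0,\omega_{\max}]$. If $\omega\sigma\in[\pi/2,\pi)$ then $\cos(\omega\sigma)\le0\le\sin(\omega\sigma)$ (not both zero), so $\beta\omega\cos(\omega\sigma)-\gamma\sin(\omega\sigma)<0$. If $\omega\sigma\in(0,\pi/2)$ the same expression equals $\cos(\omega\sigma)\bigl(\beta\omega-\gamma\tan(\omega\sigma)\bigr)$, and since $h(\omega)=\tan(\omega\sigma)-\beta\omega/\gamma$ is convex, vanishes at $0$ and has positive right derivative $\sigma-\beta/\gamma$ at $0$, we have $\tan(\omega\sigma)>\beta\omega/\gamma$, so this expression is negative as well. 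Thus $\Im W(i\omega)<0$ on $(0,\omega_{\max}]$, while $\Re W(i\omega)<0$ for $\omega>\omega_{\max}$; in particular $W(i\omega)\ne0$ for all real $\omega$. The hodograph leaves $W(0)=\gamma>0$ downward, stays in the lower half-plane up to $\omega_{\max}$, then enters and remains in the left half-plane and tends to $-\infty$; its total increment of $\arg W(i\omega)$ is $-\pi$, which differs from $\pi$. By the generalized Mikhailov Criterion $W$ has a zero in the open right half-plane, so the steady state is unstable.

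The main obstacle is getting the stability threshold in part~(i) with the \emph{exact} constant of~\eqref{cond:stab:zabek}: this rules out crude estimates of $\tan(\omega_{\max}\sigma)$ and forces the sharp inequality $\tan\phi<\pi\phi/(\pi-2\phi)$, whose verification on $(0,\pi/2)$ is the one genuinely delicate trigonometric point. Checking that $\omega_{\max}$ is a valid \textit{a priori} bound for purely imaginary zeros (via the Cauchy--Schwarz estimate together with $\omega_{\max}^2\ge\tfrac12(\beta^2+\sqrt{\beta^4+4\gamma^2})$), the monotonicity of $\tan(\omega\sigma)/\omega$, and the convexity argument for $\tan$ are routine.
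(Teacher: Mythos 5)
Your proposal is correct and follows essentially the same route as the paper: the generalized Mikhailov Criterion, the bound $g_1\le 1$ to force $\Re W(i\omega)<0$ beyond $\omega_{\max}=\tfrac12\bigl(\beta+\sqrt{\beta^2+4\gamma}\bigr)$, the sharp inequality $\tan\phi<\pi\phi/(\pi-2\phi)$ to pin down the exact constant in part (i), and convexity of the tangent to get $\Im W(i\omega)<0$ in part (ii). The only (cosmetic) differences are your Cauchy--Schwarz estimate $\sqrt{\gamma^2+\beta^2\omega^2}$ in place of the paper's cruder $\gamma+\beta\omega$, and a slightly different choice of the splitting frequency; both are valid.
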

\begin{proof}
	We use the generalized Mikhialov Criterion (for details see~\ref{appen}) to show that the change of the argument of $W(i\omega)$ as $\omega$ varies from 0 to $\infty$
	is equal to $\pi$. In fact, we show that the behaviour of the hodograph is more restrictive, namely 
	that for some $\bar\omega>0$ we have
	$\Re(W(i\omega))<0$ for $\omega>\bar \omega$,  while $\Im(W(i\omega))>0$ 
	for $\omega\in [0,\bar\omega]$. To this end first, note that if $g_1(\omega\epsilon) = 0$ and $\omega>0$, 
	then $\Re(W(i\omega))<0$. Thus, without loss of generality we assume that $g_1(\omega\epsilon) > 0$. 

	First, let us estimate the real part of $W(i\omega)$ given by~\eqref{reWzabek}. Since $b>\mu$ and
	$g_1(\varepsilon\omega)\le 1$ we have 
	\[
		\Re(W(i\omega)) \le -\omega^2 + \gamma +\omega\,\beta =: g_R(\omega).
	\]
	Now,  let us  consider the imaginary part of $W(i\omega)$. In fact, since we are interested in the sign of
	it and $g_1(\varepsilon\omega)\ge 0$ it is enough to consider the sign of
	\[
		\omega\beta\cos(\omega\sigma)- \gamma\sin(\omega\sigma)=g_I(\omega\sigma),
	\]
	where
	\[
		g_I(x) = \frac{\beta}{\sigma}\, x\cos x - \gamma \sin x. 
	\]
	Note, that~\eqref{cond:stab:zabek} implies $\beta>\gamma\sigma$, and thus 
	$g_I(x)>0$ for $x$ close to 0. On the other hand, $g_I(\pi/2)=-\gamma<0$. It is also easy to 
	check, that $g_I(x)$ has a~unique zero in $(0,\pi/2)$ for $\beta>\gamma\sigma$. Below,
	we give some estimation for this zero. 

	Since $\tan x < \frac{\pi}{2} \frac{x}{\frac{\pi}{2}-x}$, for $x\in (0,\pi/2)$ we have 
	\begin{equation}\label{nax0}
		\frac{\beta}{\gamma\sigma} x \ge  \frac{\pi}{2}\frac{x}{\frac{\pi}{2}-x} > \tan x.
	\end{equation}
	A~simple algebraic yields that the first inequality of~\eqref{nax0} is fulfilled for all
	\[
		0< x \le \frac{\pi}{2}\left(1-\sigma\cdot\frac{\gamma}{\beta}\right)<\frac{\pi}{2}.
	\]
	Thus, as long as $\omega\sigma \le  \frac{\pi}{2}\left(1-\sigma\cdot\frac{\gamma}{\beta}\right)$ 
	we have $\Im(W(i\omega))>0$. 

		However, $g_R$ is a~quadratic polynomial with $g_R(0)>0$. Moreover,  the coefficient 
	of $\omega^2$ is negative and the positive root of $g_R$ is  equal to 
	$\frac{1}{2}\left(\beta+\sqrt{\beta^2+4\gamma}\right)$. An easy algebraic 
	manipulation shows that condition~\eqref{cond:stab:zabek} is equivalent to
	\[
		\frac{\sigma}{2}\left(\beta+\sqrt{\beta^2+4\gamma}\right) < \frac{\pi}{2}\left(1-\sigma\cdot\frac{\gamma}{\beta}\right)
	\]
	and thus  $g_R(\omega)<0$ for all $\sigma\omega>\frac{\pi}{2}\left(1-\sigma\cdot\frac{\gamma}{\beta}\right)$,
	which completes the proof of part (i).

	The idea of the proof of part (ii) is similar to the proof of part (i). 
	However, this time we want to show that for $\sigma\omega\in(0,\pi)$ the imaginary part $\Im(W(i\omega))<0$, 
	while for $\sigma\omega>\pi$ the real part $\Re(W(i\omega))<0$. This indicates that the hodograph 
	goes below the $(0,0)$ point on the complex plane when $\omega$ tends to $+\infty$ and hence the change 
	of the argument of $W(i\omega)$ differs from $\pi$. Hence, from the generalized Mikhialov Criterion 
	it is clear that the trivial steady state of system~\eqref{model_resc_suma} is unstable.

	Note, that for $\omega\sigma\in(\pi/2,\pi)$ we have $\Re(W(i\omega))<0$ due to the 
	fact that on this interval cosine is negative and sine is positive. Consider 
	$\omega\sigma\in(0,\pi/2)$. Thus, cosine is positive and inequality 
	$\beta \omega\cos(\omega\sigma) - \gamma\sin(\omega\sigma)<0$ is equivalent to 
	\begin{equation}\label{tan}
		\frac{\beta}{\sigma\gamma} \omega\sigma < \tan(\omega\sigma).
	\end{equation}
	The right hand side of~\eqref{tan} is  a~strictly increasing convex function on $(0,\pi/2)$ and 
	have  the  first derivative at 0 equal to 1.  Thus,  the  first inequality of \eqref{cond:nstab:zabeka} 
	implies~\eqref{tan}.

	Now, it is enough to show that $g_R(\pi/\sigma) <0$. A~straightforth calculation yields
	\[
		-\left(\frac{\pi}{\sigma}\right)^2+\beta \frac{\pi}{\sigma} +\gamma <0. 
	\]
	Solving quadratic equation we get 
	\[
		\sigma < \frac{2\pi}{\beta+\sqrt{\beta^2+4\gamma}}\,,
	\]
	which is the second inequality of \eqref{cond:nstab:zabeka}.
\end{proof}

\begin{rem}
	Note, that \eqref{cond:stab:zabek}  implies $\sigma<\beta/\gamma$ and consequently, conditions \eqref{cond:nstab:zabeka} and \eqref{cond:stab:zabek}  are mutually exclusive. 
\end{rem}

\begin{thm}\label{prop:zabek:bif}
If the trivial steady state of system~\eqref{model_resc_suma} with  the  piecewise linear distributions defined 
by~\eqref{eq:zabek} and $\sigma\ge\varepsilon>0$ is locally asymptotically stable 
for $\sigma=\varepsilon$ and the function 
\[
 F(\omega) = \omega^4 - \varepsilon^2\Bigl(\omega^2\beta^2+\gamma^2\Bigr) g_1^2(\varepsilon \omega),
\]
where $g_1$ is defined by~\eqref{def:betagamag1} has no multiple positive zeros,
then the steady state is locally asymptotically stable for some $\sigma \in [\varepsilon,\sigma_0)$
and it is unstable for $\sigma>\bar\sigma$, with $\sigma_0\le \bar\sigma$. At $\sigma=\sigma_0$ the Hopf bifurcation 
occurs.

\end{thm}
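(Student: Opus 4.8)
The plan is to adapt the argument used for the shifted Erlang densities in Theorem~\ref{thm:Erlang}(c.i); the only genuinely new feature is that the characteristic function is now transcendental, although still entire. By~\eqref{char:zabek:sigmy} it equals $W(\lambda)=\lambda^2+(\lambda\beta+\gamma)g(\lambda\varepsilon)\e^{-\lambda\sigma}$, which has the form $P(\lambda)+Q(\lambda)\e^{-\lambda\sigma}$ with polynomial $P(\lambda)=\lambda^2$ and entire $Q(\lambda)=(\lambda\beta+\gamma)g(\lambda\varepsilon)$ (recall that $z\mapsto(\cosh z-1)/z^2$ is entire), and $W(0)=\gamma>0$, so $\lambda=0$ is never a root and any change of stability must proceed through a conjugate pair of purely imaginary roots. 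First I would record, using~\eqref{reWzabek}--\eqref{imWzabek}, that for $\omega>0$ the point $i\omega$ is a root of $W(\,\cdot\,;\sigma)$ for some $\sigma\ge\varepsilon$ exactly when $F(\omega)=0$, and that for such an $\omega$ the admissible values of $\sigma$ form an unbounded arithmetic sequence $\{\sigma_{\omega}^{0}+2k\pi/\omega:\ k\ge0\}$ fixed by the argument of $\gamma-i\omega\beta$.

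Next I would analyse $F$. Since $g_1(\varepsilon\omega)\to1$ as $\omega\to0^+$ and $g_1(\varepsilon\omega)=O(\omega^{-2})$ as $\omega\to+\infty$ (see~\eqref{def:betagamag1}), one has $F(0)<0$ and $F(\omega)\to+\infty$; as $F$ is real-analytic and eventually strictly positive, it has only finitely many positive zeros, all of them simple by hypothesis. Ordering them $0<\omega_1<\dots<\omega_{2p+1}$ and using $F(0)<0$, the sign of $F$ alternates on the complementary intervals, so $F'(\omega_j)>0$ for odd $j$ (a ``destabilising'' frequency) and $F'(\omega_j)<0$ for even $j$ (a ``stabilising'' one); in particular there is exactly one more destabilising frequency than stabilising one, and the interlacing $\omega_{2i}<\omega_{2i+1}$ gives
\[
	\sum_{j\ \mathrm{odd}}\omega_j-\sum_{j\ \mathrm{even}}\omega_j\ \ge\ \omega_1\ >\ 0 .
\]
I would then invoke Theorem~1 of~\cite{cook86ekvacioj} --- whose hypotheses concern only the behaviour of $W$ on the imaginary axis, where $|Q(i\omega)|=o(|P(i\omega)|)$ --- to conclude that at each critical value of $\sigma$ associated with $\omega_j$ exactly one conjugate pair of roots crosses the imaginary axis transversally, from the left half-plane to the right when $F'(\omega_j)>0$ and from the right to the left when $F'(\omega_j)<0$.

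Let $N(\sigma)$ be the number of roots of $W(\,\cdot\,;\sigma)$ in the open right half-plane; it is finite for every $\sigma$. By hypothesis $N(\varepsilon)=0$ (the steady state being asymptotically stable at $\sigma=\varepsilon$), so $\varepsilon$ is not a critical value; $N$ is locally constant off the critical values and jumps by $+2$ at a destabilising crossing and by $-2$ at a stabilising one. Let $\sigma_0$ be the smallest critical value in $[\varepsilon,+\infty)$; it exists because the sequences $\{\sigma^{0}_{\omega_j}+2k\pi/\omega_j:\ k\ge0\}$ are unbounded while only finitely many critical values lie in any bounded interval. Then $N\equiv0$ on $[\varepsilon,\sigma_0)$, so the steady state is locally asymptotically stable there; at $\sigma_0$ a conjugate pair of roots reaches the imaginary axis, and since $N$ was $0$ just before, this crossing is directed to the right, so $\sigma_0$ corresponds to a zero $\omega_j$ of $F$ with $F'(\omega_j)>0$. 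The positivity of $F'(\omega_j)$ yields both simplicity of the root $i\omega_j$ and the Hopf transversality (the eigenvalue branch through $i\omega_j$ crosses the imaginary axis rightwards with nonzero speed as $\sigma$ increases through $\sigma_0$), and since moreover $W(0;\sigma_0)=\gamma\neq0$, a Hopf bifurcation occurs at $\sigma=\sigma_0$. Finally, counting crossings on $[\varepsilon,\sigma]$ gives
\[
	N(\sigma)=\frac{\sigma}{\pi}\Bigl(\sum_{j\ \mathrm{odd}}\omega_j-\sum_{j\ \mathrm{even}}\omega_j\Bigr)+O(1)\ \longrightarrow\ +\infty
\]
by the displayed inequality, so $\{\sigma\ge\varepsilon:N(\sigma)=0\}$ is bounded; any $\bar\sigma\ge\sigma_0$ exceeding its supremum then makes the steady state unstable for every $\sigma>\bar\sigma$.

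The step I expect to be the main obstacle is this last one --- deducing instability for \emph{all} large $\sigma$ rather than merely along a sequence of $\sigma$'s --- since it relies on the careful bookkeeping of stabilising versus destabilising crossings together with the interlacing estimate above. A secondary technical point is to check that the transversality and simplicity parts of Theorem~1 of~\cite{cook86ekvacioj} genuinely apply when $Q$ is transcendental rather than polynomial; as those statements depend only on $W$ along the imaginary axis, this reduces to the domination $|Q(i\omega)|=o(|P(i\omega)|)$ noted above.
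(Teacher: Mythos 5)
Your proposal is correct and follows essentially the same route as the paper's proof: reduce $W(i\omega)=0$ to the real equation $F(\omega)=0$, use $F(0)<0$ together with $F(\omega)\to+\infty$ to obtain a positive zero that, by the no-multiple-roots hypothesis, satisfies $F'(\omega_0)>0$, and then invoke Theorem~1 of~\cite{cook86ekvacioj}. The only difference is one of packaging: the crossing-direction bookkeeping, the count $N(\sigma)$, and the eventual-instability conclusion that you derive by hand are exactly the content of the cited theorem, which the paper applies directly.
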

\begin{proof}
 For the considered case the characteristic function is
given by~\eqref{char:zabek:sigmy}. 
It can be easily seen that if $W(i\omega)=0$, then 
\[
	\|-i\omega\|^2 = \varepsilon^2 \bigl\|i\omega \beta +\gamma\bigr\|^2
		 \Bigl\|g\bigl(i\omega\varepsilon\bigr)\Bigr\|^2,
\]
and thus
\[
\omega^4 = \varepsilon^2\Bigl(\omega^2\beta^2+\gamma^2\Bigr) g_1^2(\varepsilon \omega)\,,
\]
where $g_1$ is defined by~\eqref{def:betagamag1}.
The function $F$ has the following properties 
\[
	F(0) = -\varepsilon^2\gamma^2<0\,, \quad \text{ and } \quad 
	\lim_{\omega\to+\infty}F(\omega) = +\infty.
\]
Hence, we conclude that there exists $\omega_0>0$ such that $F(\omega_0)=0$ and $F'(\omega_0)\ge 0$. It is assumed 
that all zeros of $F(\omega)$ are simple, thus  $F'(\omega_0)>0$ and  by Theorem~1 from~\cite{cook86ekvacioj}, the 
thesis of theorem is proved.\end{proof}

\begin{rem}
	Note, that the case where the function $F$ has multiple positive zeros is not generic.
\end{rem}
\begin{proof}
	Consider the case of the multiple positive zero(s) of $F$.
First, note the following facts: 
\begin{enumerate}[(a)]
	\item $F$ is a~analytic function so it has isolated zeros;
	\item For $\omega>\max\{\varepsilon\sqrt{\beta^2+\gamma^2},1\}$ inequality $F(\omega)>0$ holds.
	\item If $g_1(\varepsilon\omega)=0$, then $F(\omega)=(2k\pi/\varepsilon)^4>0$ for some $k\in\N$,
				$k\ge 1$. 
\end{enumerate}
Facts (a) and (b) imply, that $F$ has a~finite number of zeros. Suppose that for the arbitrary values of $\beta_0$ and 
$\gamma_0$ the function $F$ has the multiple zero at $\omega_{m,j}$, for some $j=1,2,...j_M$, where $j_M\ge 1$ is a~natural number.
Now, consider the interval $I=[0,\max\{\varepsilon\sqrt{\beta^2+\gamma^2},1\}+1]$. Clearly, 
$\partial F/\partial \gamma <0$ holds for all $\omega\neq 2k\pi/\varepsilon$. Moreover, $F'$ is also an analytic function
so it has a~finite number of zeros in the interval $I$. Thus, we choose $0<\epsilon_{\gamma}<1$ so small that for all
$\gamma\in (\gamma_0-\epsilon_{\gamma},\gamma+\epsilon_{\gamma})\setminus\{\gamma_0\}$ the function 
$F$ has no multiple root in the interval $I$. This completes the proof.
\end{proof}

\section{Stability results for parameters estimated by Hahnfeldt~\etal}\label{sec:num}

In the previous section, we analytically 
investigated the stability of the trivial steady state of the family of angiogenesis models with distributed delays~\eqref{model_resc_suma} and distributions characterized by the probability densities 
$f_i$ given by~\eqref{eq:zabek} and~\eqref{eq:Erlang}--\eqref{eq:defE}. 
System~\eqref{model_resc_suma} is a~rescaled version of~\eqref{modeldis}, where the trivial steady state of~\eqref{model_resc_suma} corresponds to the positive steady state of~\eqref{modeldis}.

In this section we illustrate our outcome  for particular model parameters considered earlier in the literature, that is the parameters estimated by Hahnfeldt~\etal in the case of the model without the treatment (see ~\cite{hahnfeldt99cancer}). Namely, we consider the following set of parameters:
\begin{equation}\label{params}
\mu=0,\quad a_H=8.73\times 10^{-3},\quad b=5.85, \quad r=0.192,
\end{equation}
and we take the function $h(\theta) = -\ln \theta$. 
We present stability results for the distributed models for two different values of parameter $\alpha$. To keep the description short, for 
model~\eqref{model_resc_suma} with $\alpha=1$ we would refer to as  the Hahnfeldt~\etal model, while to
model~\eqref{model_resc_suma} with $\alpha=0$  as  the d'Onofrio-Gandolfi model. However, in this paper we also consider positive values of $\mu$ that can be interpreted as a~constant anti-angiogenic treatment.

\subsection{Erlang probability densities}
First, focus on model~\eqref{model_resc_suma} with  the  non-shifted Erlang distributions given by~\eqref{eq:Erlang}--\eqref{eq:defE} with $\sigma=0$, $i=1,2$. 
The expression $m/a$ describes the average delay. On the other hand, in the more
general case of $m_1\neq m_2$ a~ similar interpretation leads to the conclusion that the average delays are $m_1/a$ and 
$m_2/a$. One could also consider the arbitrary values $a=a_1$ for $f_1$ and $a=a_2$ for $f_2$, 
but then  the analytical expressions become very complicated and we decided not to study 
these cases here.

\begin{figure}[!htb]
	\centerline{\includegraphics{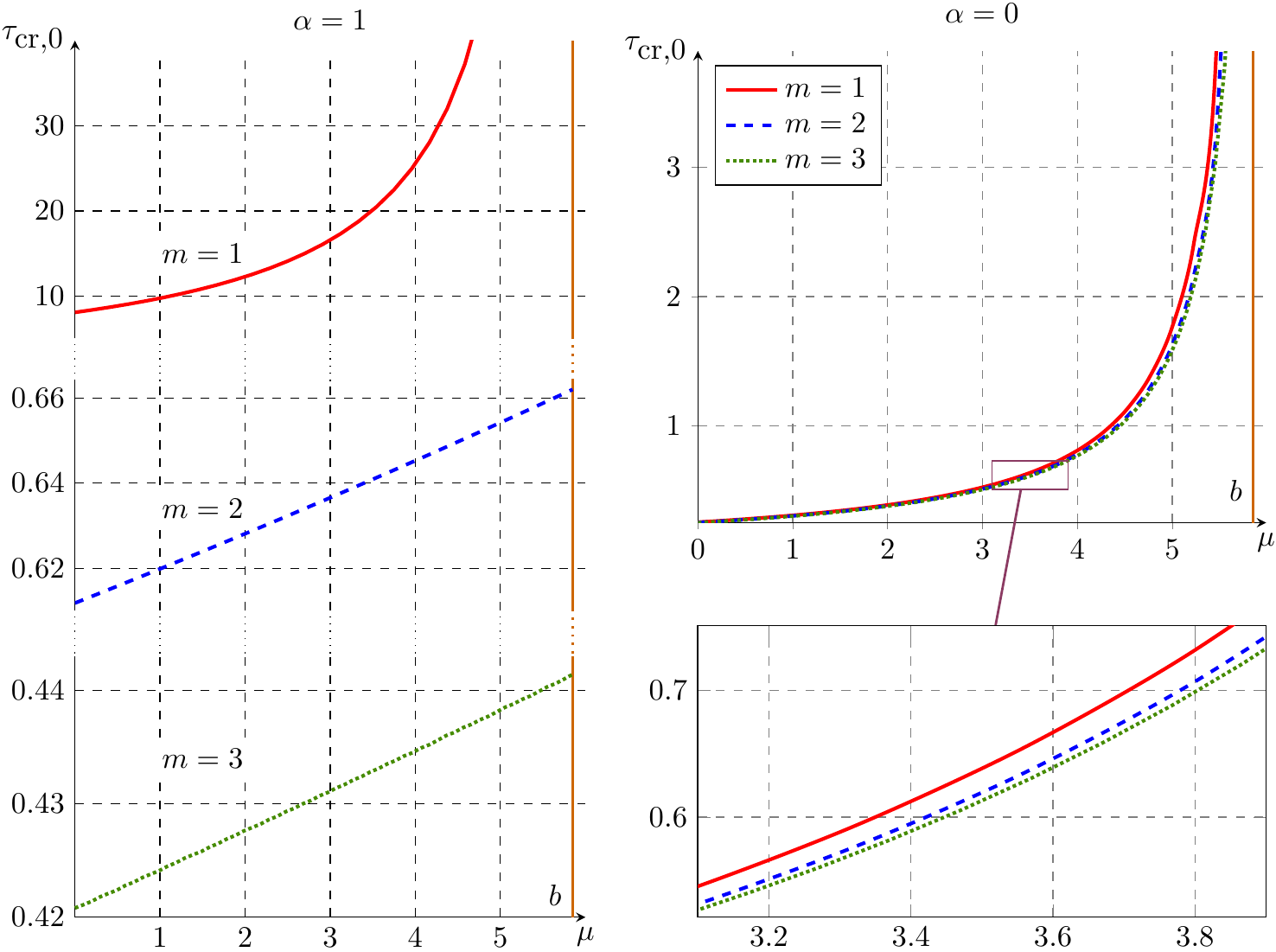}}
	\caption{Dependence of the critical average delay value in the case of the non-shifted Erlang distributions given by~\eqref{eq:Erlang}--\eqref{eq:defE}
		with $m_1=m_2=m$ on the constant treatment coefficient $\mu$ for system~\eqref{model_resc_suma}. 
		The  critical average delay is defined by $\tau_{\kryt,0} = m/a_{\kryt,0}$, where 
		$a_{\kryt,0}$ is the critical value of parameter $a$ for which stability change occurs (see 
		Proposition~\ref{thm:non-shiftE}). In the left-hand side panel results for 
		the Hahnletdt~\etal model ($\alpha=1$) are shown while in the right-hand side panel results 
		for the d'Onofrion-Gandolfi model ($\alpha=0$). All other parameters, except $\mu$,  are  defined by~\eqref{params}.
	\label{fig.zalodmu}} 
\end{figure}

\begin{figure}
\centerline{\includegraphics{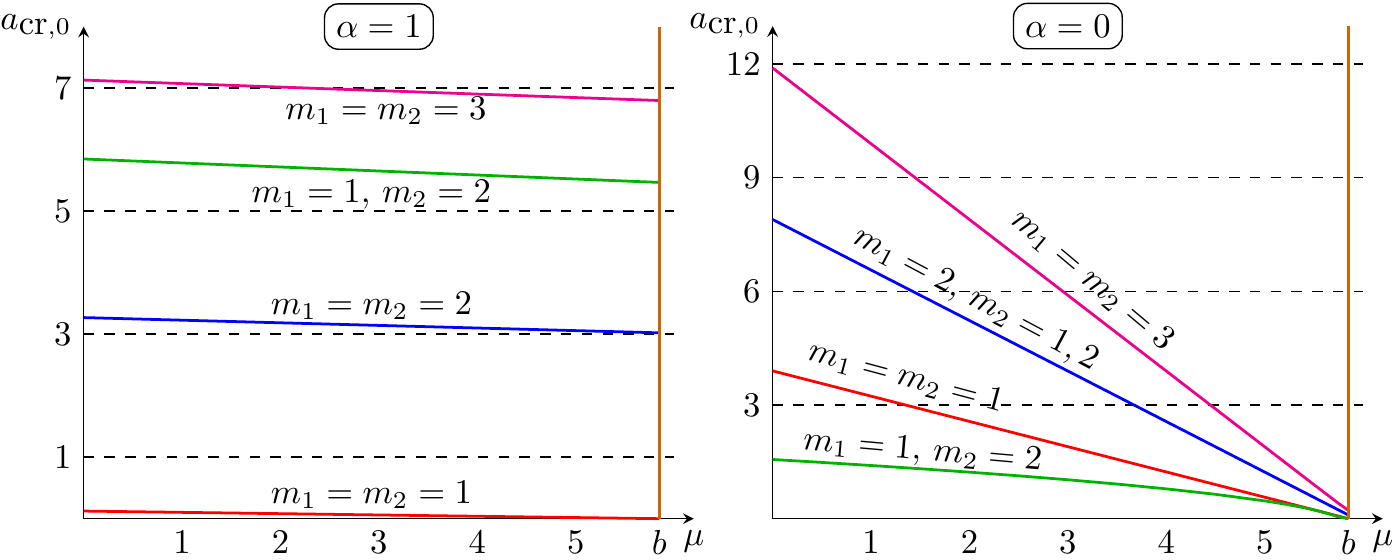}}
	\caption{Dependence of the critical value of parameter $a$ 
for the trivial steady state of system~\eqref{model_resc_suma} with the non-shifted Erlang probability densities given by~\eqref{eq:Erlang}--\eqref{eq:defE}  (i.e. $\sigma=0$). The regions above the plotted curves corresponds to the stability regions for the particular choices of parameters $m_i$, $i=1,2$. For $m_1=2$, $m_2=1$ and $\alpha=1$ the trivial steady state is always stable.  All other parameters, except $\mu$, are defined by~\eqref{params}.
	\label{fig.akr}} 
\end{figure}

In Fig.~\ref{fig.zalodmu}
the dependence of the critical average delay value $\tau_{\kryt,0}$
with $m_i=m$ ($i=1,2$) on the constant treatment coefficient $\mu$ is presented. 
In the case of the non-shifted Erlang distributions the average delay is defined by $\tau_{\kryt,0} = m/a_{\kryt,0}$ 
and the  left-hand side panel and the right-hand side panel present results for the Hahnfeldt~\etal 
model ($\alpha=1$) and the d'Onofrion-Gandolfi model ($\alpha=0$), respectively. The curves were
calculated from the stability conditions given in Proposition~\ref{thm:non-shiftE}. The stability regions of the trivial steady 
state of~\eqref{model_resc_suma} with  the non-shifted Erlang distributions are below curves, while above them there are the instability regions. 
It is worth to emphasise that there is a~qualitative difference between the cases: $m=1$ and $m=2,3$  for the Hahnfeldt~\etal model, 
while there is no such a~difference for the d'Onofrio-Gandolfi model. Clearly, for the Hahnfeldt~\etal model and $m=1$  the critical average delay
$\tau_{\kryt,0}$ tends to $+\infty$ as $\mu\to b=5.85$ while for $m=2$ and $m=3$ we have
 $\tau_{\kryt,0}\bigl|_{\mu=b}=4/\beta\approx 0.662$ and  $\tau_{\kryt,0}\bigl|_{\mu=b}=8/(3\beta)\approx 
0.441$, respectively.
Nevertheless, for the  d'Onofrio-Gandolfi model the critical average value of delays are almost the 
same for the case $m=1,2$ and $m=3$ for $\mu\in [0,b)$ and the behaviour is qualitatively comparable with 
the result for the Hahnfeldt~\etal model for $m=1$.
Moreover, in  the considered cases the critical average delay values in the case of  the non-shifted 
Erlang distributions are the~increasing functions of $\mu$. This means that the increase of treatment 
increases the stability region.

\begin{figure}[!htb]
\centerline{\includegraphics[width=0.9\textwidth]{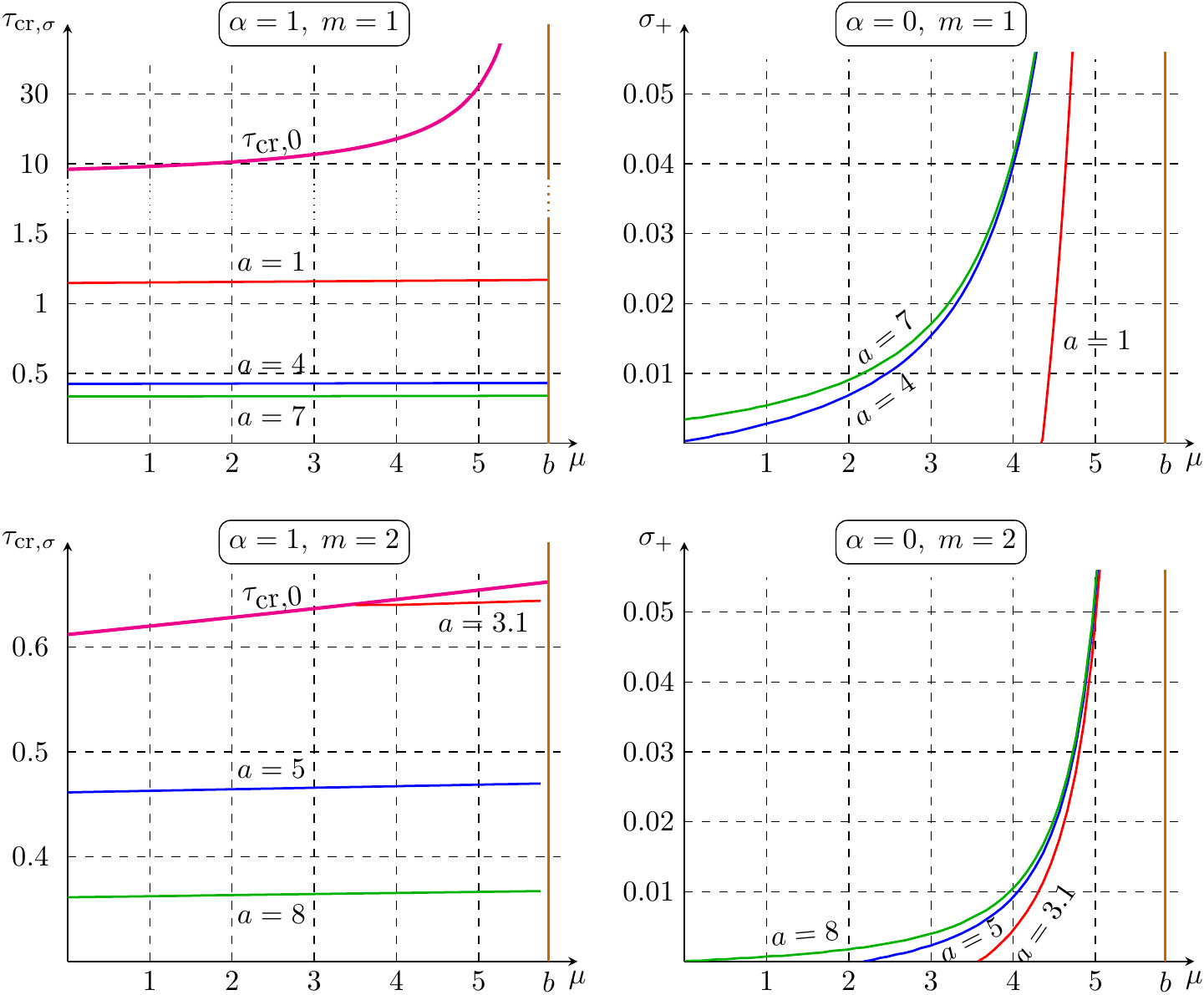}}
\bigskip{}
\centerline{\includegraphics[width=0.9\textwidth]{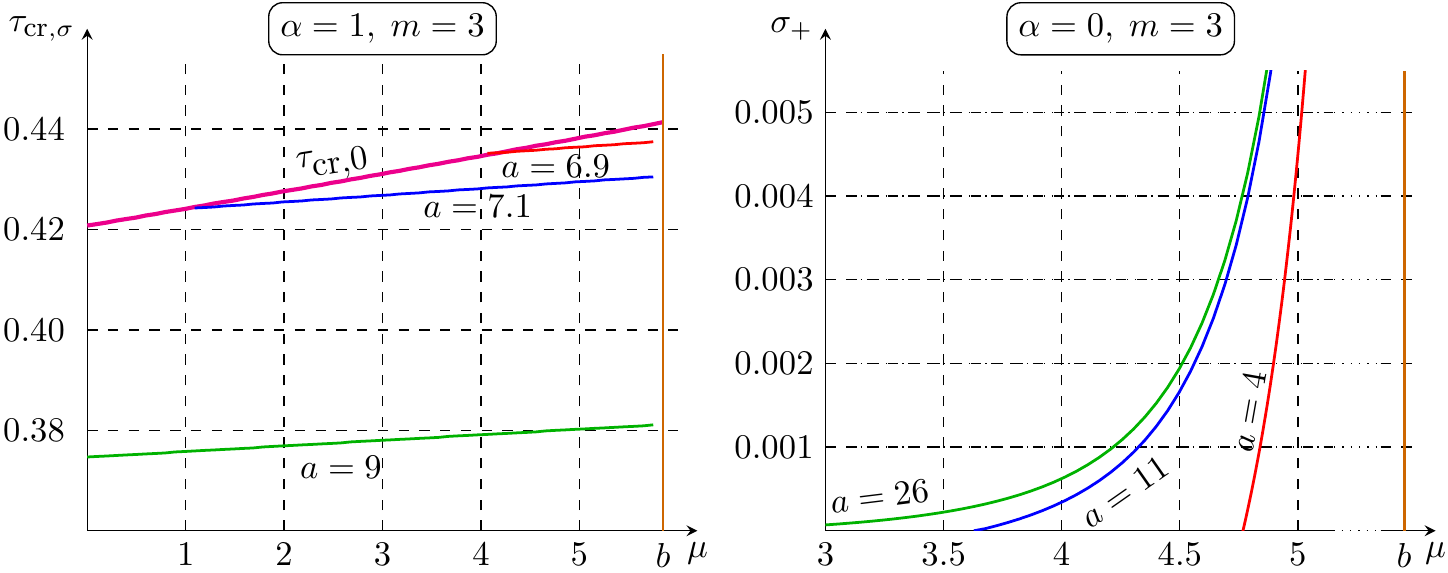}}
	\caption{Dependence of the critical average delay $\tau_{\kryt,\sigma}$ for the trivial steady state of 
		system~\eqref{model_resc_suma} with shifted Erlang probability densities given by~\eqref{eq:Erlang}--\eqref{eq:defE} for 
		the particular choice of parameter $a$ and comparison with the critical value of the average 
		delay $\tau_{\kryt,0}$ in the case of $\sigma=0$. For the case of the Hahnfeldt~\etal model 
		(i.e. $\alpha=1$), presented in the left column, the regions above the plotted curves correspond to 
		the instability regions for the particular choices of parameters $m_1=m_2$ and of parameter $a$. 
		The stability region is the region below curves.  For the d'Onofrio-Gandolfi model (i.e. $\alpha=0$) presented
		in the right column, the difference between critical value of the average delay $\tau_{\kryt,0}$
		in the case of $\sigma=0$ and the critical average delays $\tau_{\kryt,\sigma}$ for chosen values of $a$ are so small 
		that the plot would be non-informative. Thus, in the right column we presented 
		the difference $\sigma_+=\tau_{\kryt,0}$-$\tau_{\kryt, \sigma}$. 
		All other parameters, except $\mu$, are defined by~\eqref{params}.
	\label{fig.sigmakr}} 
\end{figure}

In Fig.~\ref{fig.akr} we plot the critical value of parameter $a$, that is $a_{\kryt,0}$, for the trivial steady state of system~\eqref{model_resc_suma} with the non-shifted Erlang probability densities given 
by~\eqref{eq:Erlang}--\eqref{eq:defE} (i.e. $\sigma=0$) calculated based on 
Proposition~\ref{thm:non-shiftE}. 
Here, 
the regions above the plotted curves correspond to the stability regions for the particular choices of parameters 
$m_i$, $i=1,2$, while those below correspond to the instability regions.
It should be clarified that for the case $\alpha=1$, $m_1=2$ and $m_2=1$ the steady state is stable 
for all values of $a$ .
We see that all plotted functions are 
decreasing as functions of the strength of constant treatment $\mu$. However, already for the non-shifted Erlang 
probability densities we observe a~large differences between both models regarding the model dynamics. 
For the Hahnfeldt~\etal
model we have the largest stability region for $m_1=2$ and $m_2=1$, while in the case of the 
d'Onofrion-Gandolfi model for $m_1=1$ and $m_2=2$. Moreover, for the case  $m_1=2$ and $m_2=1$ 
the steady state is 
stable independently of the parameter $a$ for  the  Hahnfeldt~\etal model, while in the same case of $m_i$ for the  
d'Onofrio-Gandolfi model the steady state might change its stability with the
change of $a$. Additionally, for $\alpha=0$ the sizes of the 
stability regions for  $m_i=2$ and $m_1=2$, $m_2=1$ are the same and it is not the case $\alpha=1$.

The stability of the steady state for the non-shifted Erlang distributions
can be determined by the Routh-Hurwitz criterion, although it may require tedious calculations, in particular for 
large values of parameters $m_1$ and/or $m_2$. On the other hand, the analytical results we obtained for the
 shifted Erlang 
distributions are rather limited. The results presented in Theorem~\ref{thm:Erlang} are only the existence results and give no information of the magnitude of the critical value of $\sigma$ for which stability is lost. Although, it is possible to derive an expression for the critical value of $\sigma$, it would involve $\arccos$ of 
some algebraic function of $\omega_0$, which value cannot be, in general, determined analytically 
and the final 
result would not be informative. In consequence, we decided to calculated  numerically the critical 
values of the average delay $\tau_{cr,\sigma}$   for the considered model with  the shifted Erlang 
distributions  for parameters given by~\eqref{params}.  
Clearly, if one wish to calculate the critical average delay for model with the shifted Erlang distributions it appears 
that it depends on sigma directly and is given by $\tau_{\kryt,\sigma}=m/a+\sigma_{\kryt}$,  where 
$\sigma_{\kryt}$ is a~critical value of $\sigma$ defined in Theorem~\ref{thm:Erlang}. 
In Fig.~\ref{fig.sigmakr} we present the stability results 
for the trivial steady state of system~\eqref{model_resc_suma} with the shifted Erlang probability densities given 
by~\eqref{eq:Erlang}--\eqref{eq:defE}. In particular, in Fig.~\ref{fig.sigmakr} (left column) we plot the
dependence of $\tau_{\kryt,\sigma}$ on the treatment strength $\mu$  
for the particular choices of parameters $m_i$ and the particular choices of parameter $a$ for the model with 
$\alpha=1$. Similarly as in Fig.~\ref{fig.zalodmu} the regions below the plotted curves correspond to the 
stability regions. Additionally, we plot $\tau_{cr,0}(\mu)$ curves to indicate 
thresholds for which the destabilizations occur in the case $\sigma=0$.  If  the curve for a~considered $a$ is above the $\tau_{cr,0}$ curve, then the steady state is unstable for $\sigma=0$ and remains unstable for all $\sigma>0$. In the both cases $m_i=1$ 
and $m_i=2$ the increase of the value of the parameter $a$ (which is equivalent to
decrease of the critical average delay), defining the shape of the probability densities, implies 
the decrease of the stability regions of the steady state. Since the corresponding curves for $\alpha=0$ are 
very close to each other instead of plotting them directly we decided to plot the differences  
$\sigma_+=\tau_{\kryt,0}-\tau_{\kryt, \sigma}$ between the critical values of the average delays $\tau_{\kryt,0}$ in 
the case  $\sigma=0$ and the critical average delays $\tau_{\kryt,\sigma}$ for the chosen values of $a$. Clearly, 
whenever plotted curve reaches the $\mu$ axis the trivial steady state becomes unstable for all $\sigma\ge 0$. 
From Fig.~\ref{fig.sigmakr} (right column) we deduce that the stability areas again decrease with the increase 
of the parameter $a$.

\subsection{Piecewise linear distributions}

Let us focus on the stability and instability regions of the trivial steady state of system~\eqref{model_resc_suma} with piecewise 
linear distributions defined by~\eqref{eq:zabek}. Since in such a~case we have smaller number of parameters defining the shape of the probability density we are able to plot the stability regions in ($\sigma$, $\varepsilon$) plane, see Fig.~\ref{fig.zabek}. Note that, 
for 
$\varepsilon>\sigma$  system~\eqref{model_resc_suma} becomes a~neutral system, which is out of our consideration hence this region is greyed.  Clearly, the estimations obtained in Proposition~\ref{prop:zabek:stab} are rough. However, solving numerically a~system $\Re W(i\omega)=0$, $\Im W(i\omega)=0$, where
real and imaginary part of characteristic function are given by~\eqref{reWzabek} and~\eqref{imWzabek}, respectively, 
we are able to calculate the stability region and the curve for which the stability change occurs. This line was plotted
in Fig.~\ref{fig.zabek} as a~thick solid blue line. 

 Additionally, we plot 
(in both panels) the vertical dashed lines that denote the conditions guaranteeing stability (see 
Proposition~\ref{prop:zabek:stab}(i)) or instability (see Proposition~\ref{prop:zabek:stab}(ii)) of the trivial steady 
state. For the  Hahnfeldt~\etal model  (left panel in Fig.~\ref{fig.zabek}) the 
condition~\eqref{cond:stab:zabek} from  Proposition~\ref{prop:zabek:stab} is denoted by a~vertical dashed line. 
In that case, the condition \eqref{cond:nstab:zabeka} cannot be fulfilled since 
the  expression on the right-hand side of \eqref{cond:nstab:zabeka} is always smaller than $\beta/\gamma$.
For the  d'Onofrio-Gandolfi model the dashed vertical line on the left to the solid 
vertical line indicates the condition~\eqref{cond:stab:zabek}. Two dashed vertical lines 
on the right to the solid one indicate the region in which the condition~\eqref{cond:nstab:zabeka} is fulfilled. Those numerical results show that
the conditions from  Proposition~\ref{prop:zabek:stab} are only sufficient but not 
necessary. From the figure we also deduce that the stability region for the trivial steady state for the   
d'Onofrio-Gandolfi model is smaller than the one for the distributed Hahnfeldt~\etal model. Moreover, 
computed by us the critical values of $\sigma$ for $\varepsilon\to 0$ and both $\alpha=1$ 
and $\alpha=0$ agrees with those calculated by Piotrowska~and~Fory{\'s} in \cite{Piotrowska2011} for the models 
with discrete equal delays. That agrees with the intuition, since for $\varepsilon\to 0$ models given by~\eqref{model_resc_suma} with piecewise 
linear distributions defined by~\eqref{eq:zabek} reduce to the models with  double discrete delay.
\begin{figure}[!htb]
	\centerline{\includegraphics{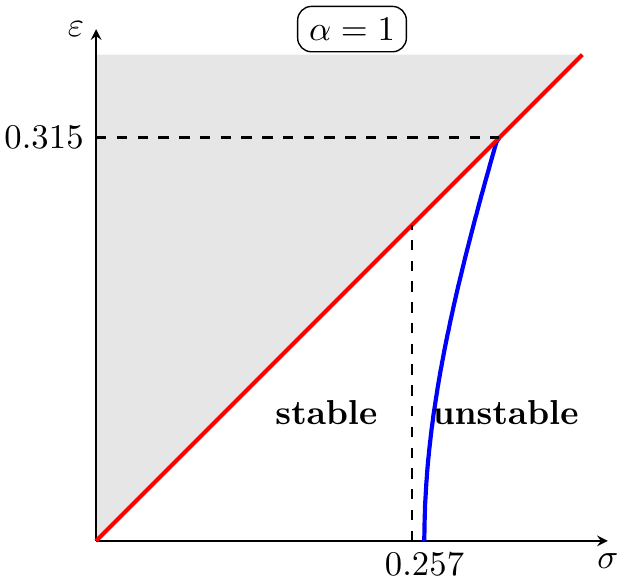}\;\includegraphics{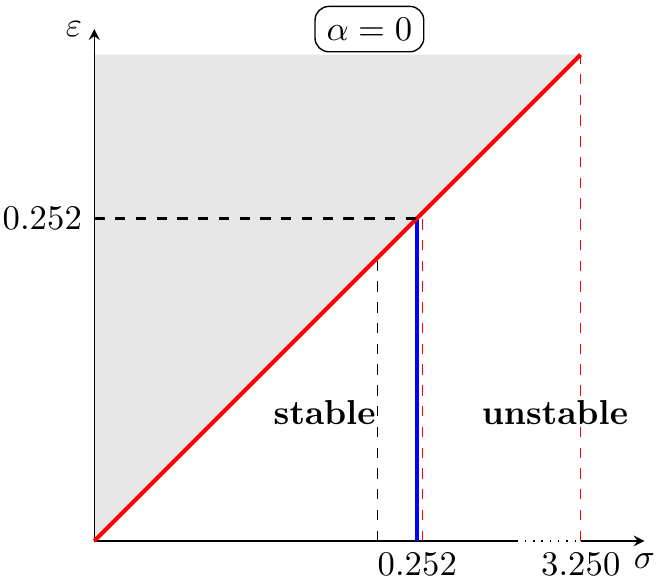}}
	\caption{Stability and instability of the trivial steady state of system~\eqref{model_resc_suma} with piecewise 
	linear distributions defined by~\eqref{eq:zabek} with $\sigma>0$, $\varepsilon>0$, $i=1,2$
	for  the parameters given by~\eqref{params} with $\alpha=1$ (the left hand-side panel) and $\alpha=0$ (the right-hand side 
	panel). The greyed part denotes the region where $\varepsilon>\sigma$
	for which system~\eqref{model_resc_suma} becomes a~neutral system. The solid lines indicate the stability switch. In the left-hand side panel, the vertical dashed line denotes the 
	condition~\eqref{cond:stab:zabek} from Proposition~\ref{prop:zabek:stab}.
	In the right-hand side panel, the vertical dash line in the stability region denotes  
	the scope of the condition~\eqref{cond:stab:zabek} form Proposition~\ref{prop:zabek:stab}, while two  vertical  dash lines in the instability region denote  
	the scope of condition~\eqref{cond:nstab:zabeka}. 
	\label{fig.zabek}} 
\end{figure}

\begin{figure}[!htb]
	\centerline{\includegraphics{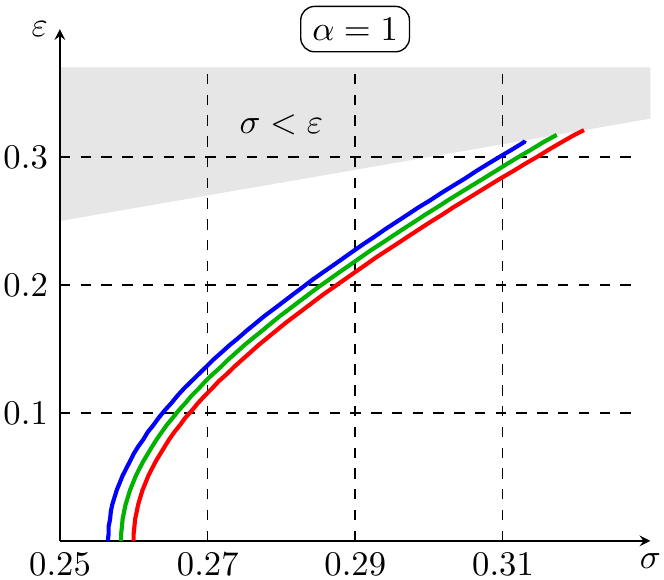}\;\includegraphics{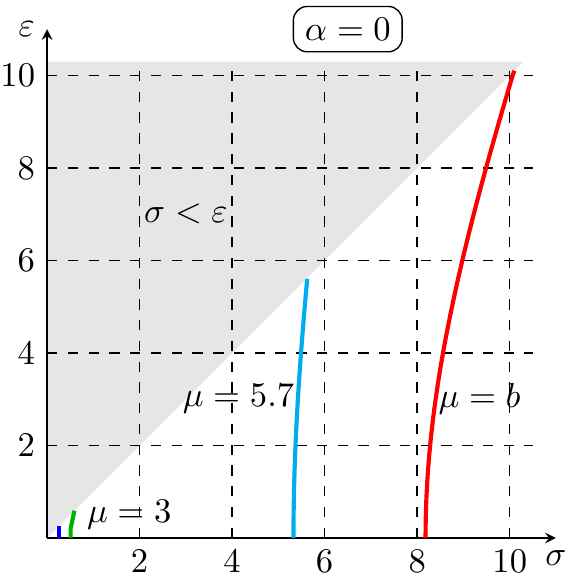}}
	\caption{Stability and instability of the trivial steady state of system~\eqref{model_resc_suma} with piecewise 
	linear distributions defined by~\eqref{eq:zabek} with $\sigma>0$, $\varepsilon>0$, $i=1,2$
	for the parameters given by~\eqref{params} with $\alpha=1$ (the left hand-side panel) and $\alpha=0$ (the right-hand side 
	panel) and different $\mu$. The greyed part denotes the region where $\varepsilon>\sigma$
	for which system~\eqref{model_resc_suma} becomes a~neutral system. The solid lines indicate the stability switch
	for (from left to right) $\mu=0,3,b$ for $\alpha=1$ and for $\mu=0,3,5.7,b$ for $\alpha=0$. \label{fig.zabekmu}} 
\end{figure}

For $\mu>0$ and the Hahnfeldt~\etal model we observe a~move of the stability switch curve to the right with hardly any change in the shape indicating that the stability region increases with increase of parameter $\mu$, see left panel in  Fig.~\ref{fig.zabekmu}. This shift is very small. Moreover, for $\mu=b$ we obtain limiting values: $0.26$ for $\varepsilon\to 0$ and $0.33$ for 
$\varepsilon\to \sigma$. On the other hand, for the d'Onofrio-Gandolfi model the change is more visible. The lines start to lean to the right. For small $\mu$, the change is quite small, compare with right panel in Fig.~\ref{fig.zabek}. For example, 
for $\mu=3$ we obtain $\sigma=0.509$ for $\varepsilon\to 0$ and $\sigma = 0.59$ for $\varepsilon=\sigma$. As $\mu$ approaches to $b$ the changes become more rapid. For $\mu=5.7$ we have $\sigma=5.326$ for $\varepsilon\to 0$ and $\sigma = 5.632$ for $\varepsilon=\sigma$, while for $\mu=b$ we have $\sigma=8.181$ and $\sigma =10.065$ for 
 $\varepsilon\to 0$  and $\varepsilon=\sigma$, respectively.



\section{Discussion}\label{sec:dis}

Although discrete delays often
appear in models describing various biological phenomena, 
e.g.~\cite{bocharov2000,Erneux09,mbab12nonrwa,monk03currbiol,piotrowskaMBEmut,Piotrowska2012RWA,uf02jtb,mbufjp11jmaa,jmjpmbuf11bmb,cooke99jmb,enatsu11dcdsB}
and references therein, it is 
obvious that in natural processes delay, if it exists, is usually somehow distributed around some average 
value 
due to differences between individuals and/or environmental noise. Thus, we believe that  distributed 
delays 
are more realistic. However, in some cases, when the distribution has small variance and  compact support discrete delays may be treated as a~good approximation. 
Clearly, models with discrete and distributed delays might have different dynamics in some ranges of parameters. 
In the presented paper we have compared the behaviour of two types of systems in the context of the 
stability of the steady state for a~particular family of models  describing the tumour angiogenesis process.  

\begin{table}[!htb]
\caption{The critical average delay value $\tau_{\kryt,0}$ in the case of non-shifted Erlang distributions given 
			by~\eqref{eq:Erlang}--\eqref{eq:defE} with $m_1=m_2=m$ and $a_1=a_2=a$ 
		for arbitrary chosen parameters $\mu$ for system~\eqref{model_resc_suma}. 
		The critical average delay is defined by $\tau_{\kryt,0} = m/a_{\kryt,0}$, where 
		$a_{\kryt,0}$ is the critical value of parameter $a$ for which  the stability change occurs (see 
		Theorem~\ref{thm:non-shiftE}).  }
\centerline{
\begin{tabular}{l|r|r|r|r||r|r|r|r|}
		 &\multicolumn{4}{|c||}{Hahnfeldt~\etal model ($\alpha=1$)} 
		 &\multicolumn{4}{c|}{d'Onofrion-Gandolfi model ($\alpha=0$) } \\ \hline
$\mu$		 & $0$ & $2$ & $4$ & $5.85$ & $0$ & $2$ & $4$ & $5.85$\\ \hline
$m_1=m_2=1$ & 8.069 & 12.261 & 25.515 & $\infty$
				& 0.256 & 0.390 & 0.811 & $\infty$	\\ \hline
$m_1=m_2=2$ & 0.611 & 0.628 & 0.645 & 0.662
				& 0.253 & 0.382 & 0.780 & 20.833			\\ \hline
$m_1=m_2=3$ & 0.421 & 0.428 & 0.435 & 0.441
				& 0.252 & 0.380 & 0.770 & 13.889			\\ \hline
\end{tabular}}\label{tab:kr}
\end{table}

Presented analysis of distributed models includes: the uniqueness, the positivity and the global existence of 
the solutions, the existence of the steady state and the possibility of the existence of the stability switches. 
We have analytically derived conditions, involving the parameters defining the probability densities, 
guaranteeing 
the stability or instability of the steady state. We have also shown that in some cases the single 
stability switch is observed and the Hopf bifurcation occurs. 

For the particular set of parameters, estimated by Hahnfeldt~\etal, we have investigated the stability regions for steady state. We compared the results for both the  
Hahnfeldt~\etal ($\alpha=1$) and the  d'Onofrio-Gandolfi ($\alpha=0$) models in the case of different 
probability densities. For both models we considered the Erlang shifted and non-shifted distributions as 
well as the piecewise linear distributions.
We want to emphasise here that, in the general case, it is hard to say for which model Hahnfeldt~\etal or d'Onofrio-Gandolfi the 
stability region is larger since it strongly depends on the considered probability densities and theirs shapes.  
However, we observe certain similarities. 

First, for $\mu=0$, i.e. the family of models without treatment, we see that for the Hahnfeldt~\etal model with 
non-shifted Erlang probability density the larger $m_i=m$ are the smaller the stability region 
is, see Fig.~\ref{fig.zalodmu} 
 and Table~\ref{tab:kr}. The same holds for the d'Onofrio- Gandolfi model with the non-shifted 
Erlang distributions, see Table~\ref{tab:kr} and moreover, for all considered $m_i=m$, $i=1,2$ (and $\mu=0$)
for the Hahnfeldt~\etal model the stability region is larger than for d'Onofrion-Gandolfi one. 
Similarly, for the models with the piecewise linear distributions the stability region for $\mu=0$ 
for the Hahnfeldt~\etal model is larger than the one for the d'Onofrio-Gandolfi model, compare Fig.~\ref{fig.zabek}.

If we consider a~positive parameter $\mu$ smaller than $b$ 
(to ensure the non-negativity of the  steady state of model~\eqref{modeldis}, which corresponds to the trivial 
steady state of 
\eqref{model_resc_suma}), we observe a~similar tendency for the Erlang distributions in dependence on the 
parameters $m_1=m_2$
(compare Table~\ref{tab:kr} for the non-shifted distribution case).
However, the dependence on $\mu$ depends strongly on the chosen model. For the Hahnfeldt~\etal model 
and $m_1=m_2\ge 2$, this 
dependence is almost linear and very weak, while for the d'Onofrio-Gandolfi model it is much stronger. In 
results, for  $m_1=m_2\ge 2$ and any sufficiently large value of $\mu\in[0,b)$ the 
critical average delay for  the Hahnfeldt~\etal model becomes smaller than for the d'Onofrio-Gandolfi 
model. 
The case $m_1=m_2=1$ is different. Dependence on $\mu$ is similar for both 
models and 
the critical average delay for  the Hahnfeldt~\etal model stays larger than for the 
d'Onofrio-Gandolfi model for any given value of $\mu$. Similarly, for different values of $m_i$, it 
seems that dependence on $\mu$ is stronger for the d'Onofrio-Gandolfi model,  see Fig.~\ref{fig.akr}.
Nevertheless, for the d'Onofrio-Gandolfi model with the non-shifted Erlang distributions there is no difference 
regarding the stability results between cases
$m_1=m_2=2$ and $m_1=2$, $m_2=1$, which is not the case for the Hahnfeldt~\etal model, where for 
$m_1=2$, $m_2=1$ we have stability independently on the value of parameter $a$.
For the shifted Erlang distributions we have investigated the changes of the size of the stability region also in the 
context of the change of  the value of parameter $a$. For both the Hahnfeldt~\etal and  d'Onofrion-Gandolfi 
models we see that for all considered $m=m_i$, $i=1,2$, the increase of the parameter $a$ decrease the stability 
region, however in the case of d'Onofrio-Gandolfi model we 
have compared the differences of the $\tau_{cr,0}-\tau_{cr, \sigma}$.
A~similar increase of the stability region with 
a decrease of concentration of delay distribution was observed in~\cite{Bernard2001} for one linear equation with distributed delay and Erlang probability density. 

Nevertheless, we see that since for all considered cases of the shifted and non-shifted 
Erlang distributed models $\tau_{cr,\sigma}$ and $\tau_{cr,0}$, respectively, are increasing (sometimes 
slightly) functions of variable $\mu$ implying that the increase of the constant
treatment strength enlarges the stability regions for the positive steady state of model~\eqref{modeldis}. Our analysis also shows that the increase of the positive parameter $\mu$ 
enlarges the stability area for the steady state for both (Hahnfeldt~\etal and d'Onofrion-Gandolfi) distributed models with the piecewise linear distributions, but this time for the d'Onofrion-Gandolfi model this increase is more pronounced than for the distributed Hahnfeldt~\etal model, see Fig.~\ref{fig.zabekmu}. 
Moreover, for small values of parameter $\mu$ the stability region for the Hahnfeldt~\etal 
model with the piecewise linear distributions is larger than the one for the d'Onofrio-Gandolfi 
model, while for the larger values of $\mu$ the situation is opposite. Performed simulations show that 
the change occurs before $\mu \approx 1.42$. 

The variances for the shifted and non-shifted Erlang distributions are given by $\frac{m_i}{a^2}$ and 
they give a~measure of the degree of the concentration of the delay around the mean. 
Actually, a~better measure of the spread of the distribution around the mean for our purposes is the 
coefficient of variation, i.e. the ratio of the standard deviation to the mean, that is $\sqrt{m_i}/(a\sigma+m_i)$. 
Clearly, for the non-shifted distributions 
we have  $1/\sqrt{m_i}$, which implies that larger parameter $m_i$ yields smaller the 
considered ratio. Hence, the increase of $m_i$ decreases the percentage dispersion of the average delay. 
On the other hand, for the shifted Erlang 
distributions, the coefficient of variation is a~decreasing function of $\sigma$. This dependence is obvious since the increase of $\sigma$ with fixed $m_i$ and $a$ means that the average delay is increased while standard deviation remains constant. The coefficient of variation is also a~decreasing function of parameter $a$. On the other hand, if we fix $a$ and $\sigma$, and study the influence of $m_i$, then the coefficient of variation increases if $m_i<a\sigma$ and decreases otherwise.

For piecewise linear distributions ($\sigma\geq\varepsilon$) we have the average value of $f_i$ (defined by 
\eqref{eq:zabek}) equal to $\sigma$ and the standard deviation given by $\varepsilon/\sqrt{6}$. Hence,  the coefficient of variation equals to $\varepsilon/(\sigma\sqrt{6})$. Thus, it is an~increasing 
function of $\varepsilon$ (for fixed $\sigma$) and  a~decreasing function of $\sigma$ (for fixed $\varepsilon$). 
Thus, we conclude that the percentage dispersion of the average delay is the increasing function of 
$\varepsilon$ and decreasing function of $\sigma$ and it is always smaller than 1. 
Clearly, all that should be taken into account whenever the probability distributions describing the 
characteristics of delays are estimated.

We believe that with the proposed type of models one can describe in a~more realistic way  the process of angiogenesis. However, the considered model should be validated with the experimental data. 
Our results show that a~system behaviour in the case of distributed delays 
might be different from the behaviour of the system for discrete delays. Hence, to validate the model with the experimental data
it is important to choose the right type of the delay(s) in the model preferably based on the experimental data or 
hypothesis postulated by the experimentalists. Our study also shows that a~shape of probability density for the 
distributed delays has an essential influence on the model dynamics. On the other hand, it is not a~trivial task 
to choose the right distribution if one has not sufficiently large set of the experimental data and does not 
assume any particular shape of the distribution. This problem need to be individually addressed according the 
available data and it should be done for the considered model in the future.
In our opinion, models with distributed delays could be also studied in the context 
of efficiency of different types of tumour  therapies. 

 We would like to emphasise that oscillations in the experimental data for 
different human and animal cell lines in the context of neoplastic diseases, even without administration of any 
treatment, has been already widely observed for leukaemia 
(\cite{Mackey1978,Fortin1999}), B-cell lymphoma (\cite{Uhr1991}) and solid tumours (\cite{Chignola2000, Chignola2003}). 
To investigate the origin of such phenomenon a~number of mathematical and numerical models has been developed. For 
example Kuznetsov et al., \cite{Kuznetsov1994}, studying the interactions between an immune system 
and a~tumour by system of ODEs, pointed out that different local and global bifurcations 
(observed for the realistic parameter values they estimated) 
 showed that there might be a~connection between the phenomena of immunostimulation 
of tumour growth, formation of ``dormant'' state and ``sneaking through'' of tumour. Similarly, Kirschner and Panetta in~\cite{Kirschner1998} discussed
inter alia the biological implications of the Hopf bifurcation in the case of the tumour-immune system interactions. More recently, Pujo-Menjouet et al.~\cite{Pujo-Menjouet2005}, 
using the model with discrete delay for which the Hopf bifurcation is observed,
explained why and how short cell cycle durations gave rise to long period oscillations 
of order form 40 to 80 day for periodic chronic myelogenous leukaemia patients. 
At the same time in \cite{Bernard2004} the bifurcation phenomenon and its role for a~white-blood-cell
production model with discrete delay was studied. The experimentally observed oscillations in the tumour volume~\cite{Chignola2000, Chignola2003}
can be explained by the analytic results from earlier and more theoretical paper~\cite{Byrne1997} focussing on the 
growth of avascular multicellular tumour spheroids, where the delay in the process of proliferation was
considered.  Later on, this work was extended in \cite{mbuf03mcm_prol}, while the model with
the delay only in the  apoptosis  process was considered
in~\cite{mbuf03mcm_reg}. Next, the model that took into account delays in both processes was studied in~\cite{piotrowska2008a}. 
It is believed that the 
p53 protein plays an essential role in preventing certain types of cancers and is called ``guardian of the genome''~\cite{Lane1992nature}, hence we could also mention here the models of Hes1 and p53 gene 
expressions, both with negative feedback, and with discrete time delays proposed by Monk \cite{monk03currbiol}. The model of the Hes1 protein was examined later by 
Bernard et al.~\cite{Bernard2006} and Bodnar and Bart\l{}omiejczyk~\cite{mbab12nonrwa}.In that cases the mathematical 
analysis showed, that experimentally observed oscillation can be explained by the time lag present 
in the system due to the DNA~transcription process as well as the diffusion time of mRNA~and the Hes1 proteins.

Finally, we discuss shortly possible generalisation of the results obtained in this paper.
The first equation of the considered model is quite general and can describe a~process with saturation. 
On the other hand, the second equation comes from a~quasi-stationary approximation of some reaction-diffusion 
equations under particular assumptions that are justified for the angiogenesis process. Thus, most probably 
it cannot be straightforwardly transformed to describe other biological or medical processes. However, the
 stability analysis performed in the paper is based on the local properties of the functions. Thus, we may 
assume a~more general form of the second equation namely $q(t) G(p_t/q_t, p)$ (where 
$p_t$ and $q_t$ according to a~functional notation typical for DDEs denote terms with delay) assuming that $G$ is an 
increasing function in the first variable and decreasing in the second one. Then, in our opinion, under some 
additional assumptions, one should obtain results similar to those presented in the paper.


\appendix
\section{Generalized Mikhailov Criterion}\label{appen}

Here we formulate a~generalized version of the Mikhailov Criterion (see eg.~\cite{uf04jbs}). The
classical formulation of the Mikhailov Criterion is for the characteristic functions that are 
sums of polynomials multiplied by an exponential function. However,  it can be 
generalized for wider class of functions. Below, we present a~detailed formulation.

\begin{thm}[Generalised Mikhailov Criterion]
	Let us assume that $W:\C\to\C$ is an analytic function, has no zeros on imaginary axis 
	and fulfils 
	\begin{equation}\label{charfun}
		W(\lambda) = \lambda^{n} +o(\lambda^{n}), \quad 
		W'(\lambda) = n\lambda^{n-1} + o(\lambda^{n-1}),
	\end{equation}
	for some natural number $n$. Then the number of zeros of $W$ in the 
	right-hand complex half-plane is equal to $n/2-\Delta/\pi$
	where 
	\[
		\Delta = \Delta_{\omega\in[0,+\infty)} \text{arg} W(i\omega). 
	\]
\end{thm}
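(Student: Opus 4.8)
The statement is the Generalised Mikhailov Criterion, an argument-principle result: I want to count the zeros of an analytic function $W$ in the open right half-plane $\{\Re\lambda>0\}$ in terms of the change of argument of $W(i\omega)$ along the imaginary axis. The natural tool is the argument principle (winding number) applied to a large semicircular contour in the right half-plane, so the first thing I would do is fix notation: let $\Gamma_R$ be the positively oriented boundary of the half-disc $D_R=\{\lambda:\ |\lambda|<R,\ \Re\lambda>0\}$, which consists of the segment $S_R$ of the imaginary axis from $iR$ to $-iR$ and the semicircular arc $C_R=\{Re^{i\theta}:\ \theta\in[-\pi/2,\pi/2]\}$. Since $W$ is analytic (no poles) and has no zeros on the imaginary axis by hypothesis, for all $R$ large enough that all right-half-plane zeros lie inside $D_R$, the argument principle gives
\[
 N = \frac{1}{2\pi}\,\Delta_{\Gamma_R}\arg W(\lambda),
\]
where $N$ is the number of zeros of $W$ in the right half-plane counted with multiplicity.

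**Splitting the contour.** Next I would decompose $\Delta_{\Gamma_R}\arg W = \Delta_{C_R}\arg W + \Delta_{S_R}\arg W$ and evaluate each piece in the limit $R\to\infty$. For the arc $C_R$: the asymptotic condition \eqref{charfun}, $W(\lambda)=\lambda^n+o(\lambda^n)$, means that on $C_R$ the function $W$ behaves like $\lambda^n$ up to a factor tending to $1$ uniformly in $\theta$, so $\Delta_{C_R}\arg W \to \Delta_{C_R}\arg(\lambda^n) = n\cdot\Delta_{C_R}\arg\lambda = n\pi$ as $R\to\infty$ (the arc sweeps $\arg\lambda$ from $-\pi/2$ to $\pi/2$, i.e.\ through $\pi$). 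For the segment $S_R$, traversed from $iR$ down to $-iR$: I split it at the origin into $\omega$ going from $R$ down to $0$ and then from $0$ down to $-R$. Writing $W(i\omega)=u(\omega)+iv(\omega)$, the hypothesis that $W$ is analytic with real Taylor coefficients is \emph{not} assumed here, so I should instead argue directly. Actually the cleanest route: since $W$ need not have real coefficients, I would use the fact that $\Delta_{\omega\in[0,R]}\arg W(i\omega)$ plus $\Delta_{\omega\in[-R,0]}\arg W(i\omega)$ together give $\Delta_{S_R}$; but the standard statement in the paper only references $\Delta=\Delta_{\omega\in[0,\infty)}\arg W(i\omega)$, which tacitly uses the conjugate-symmetry $W(\overline\lambda)=\overline{W(\lambda)}$ (true for all the characteristic functions considered, which are built from $\lambda$, real constants, and $e^{-\lambda\sigma}$, $\cosh(\lambda\varepsilon)$). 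Under that symmetry, $W(-i\omega)=\overline{W(i\omega)}$, so the change of argument over $\omega\in[-R,0]$ equals that over $\omega\in[0,R]$ after the orientation reversal, giving $\Delta_{S_R}\arg W = -2\,\Delta_{\omega\in[0,R]}\arg W(i\omega)$ (the minus sign because $S_R$ runs from $+iR$ to $-iR$, i.e.\ decreasing $\omega$). Hence, letting $R\to\infty$,
\[
 2\pi N = n\pi - 2\Delta, \qquad \Delta=\Delta_{\omega\in[0,+\infty)}\arg W(i\omega),
\]
which rearranges to $N = n/2 - \Delta/\pi$, the claimed formula.

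**The main obstacle.** The delicate point is the uniform control on the arc $C_R$: I must upgrade $W(\lambda)=\lambda^n+o(\lambda^n)$ (a statement about $|\lambda|\to\infty$ along rays, or uniformly in direction?) to a genuinely uniform estimate $|W(\lambda)/\lambda^n - 1|\to 0$ for $\lambda\in C_R$ as $R\to\infty$, so that $W(\lambda)$ and $\lambda^n$ stay in the same half-plane relative to $0$ and the homotopy $t\mapsto (1-t)\lambda^n + tW(\lambda)$ avoids $0$, yielding $\Delta_{C_R}\arg W \to n\pi$. For the characteristic functions treated in the body of the paper this is automatic because the $o(\lambda^n)$ terms are of the form $\lambda^k e^{-\lambda\sigma}$ with $k<n$ and $\sigma\ge 0$, which are bounded (even decaying) on the closed right half-plane; the companion estimate on $W'$ is not actually needed for the zero count but is presumably included so that the winding number is well-defined and the hodograph is a genuine $C^1$ curve. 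So in the write-up I would (i) state the conjugate symmetry or the two-sided convention explicitly, (ii) prove the uniform arc estimate from \eqref{charfun}, (iii) invoke the argument principle on $\Gamma_R$, and (iv) pass to the limit and rearrange. The only real work is step (ii); everything else is bookkeeping with the argument principle.
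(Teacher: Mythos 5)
Your proposal is correct and follows essentially the same route as the paper, which simply applies the argument principle to the boundary of a large right-half-plane semicircle and defers all details to the cited proof of the classical Mikhailov criterion in \cite{uf04jbs}. Your write-up actually supplies more than the paper does: in particular, your observation that reducing the imaginary-axis contribution to $\Delta_{\omega\in[0,+\infty)}\arg W(i\omega)$ tacitly requires the conjugate symmetry $W(\overline{\lambda})=\overline{W(\lambda)}$ (true for every characteristic function used in the paper, but not stated in the theorem's hypotheses), and that the $o(\lambda^n)$ estimate must hold uniformly on the arc, are exactly the points the paper glosses over with the phrase ``all limits remain the same.''
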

Note, that $\Delta$ denotes the change of argument of the vector $W(i\omega)$ in the positive direction
of complex plane as $\omega$ increases from $0$ to $+\infty$. 
\begin{proof}
	The proof is exactly the same as the proof of the Mikhailov Criterion in~\cite[Th.~1]{uf04jbs}. 
	It is based on integration of the characteristic function on the following contour: part of imaginary axis,
	portion of the imaginary axis from $i\rho$ to $-i\rho$ and a~semi-circle of radius $\rho$  with the middle in zero 
	located in the right half of complex plane.
	The fact that $W$ is an analytic function implies that it has the isolated zeros and 
	all integration can be done in the same way as in~\cite{uf04jbs}. Moreover, the
	assumption~\eqref{charfun} implies that all limits calculated in~\cite{uf04jbs}
	remain the same. 
\end{proof}

\section{Acknowledgements}
The work on this paper was supported
by the Polish Ministry of Science and Higher Education, within the Iuventus Plus Grant: "Mathematical
modelling of neoplastic processes" grant No. IP2011 041971.  

We thank the reviewers for their valuable comments that improved our paper.



\end{document}